\date{}
\renewcommand{\uppercasenonmath}[1]{}
\numberwithin{equation}{section} \theoremstyle{plain}
\newtheorem*{thm*}{Main Theorem}
\newtheorem{thm}{Theorem}[section]
\newtheorem{cor}[thm]{Corollary}
\newtheorem*{cor*}{Corollary}
\newtheorem{lem}[thm]{Lemma}
\newtheorem*{lem*}{Lemma}
\newtheorem{prop}[thm]{Proposition}
\newtheorem*{prop*}{Proposition}
\newtheorem{rem}[thm]{Remark}
\newtheorem*{rem*}{Remark}
\newtheorem{exa}[thm]{Example}
\newtheorem*{exa*}{Example}
\newtheorem{df}[thm]{Definition}
\newtheorem*{df*}{Definition}
\newtheorem*{conj*}{Conjecture}
\newtheorem*{ack*}{ACKNOWLEDGEMENTS}
\newcommand{\pf}{\noindent\begin {proof}}
\newcommand{\epf}{\end{proof}}
\newcommand{\bc}{\begin{center}}
\newcommand{\ec}{\end{center}}
\newcommand{\ccc}[3]{{
  \left(\begin{smallmatrix} {#1}   \\   {#2} \\\end{smallmatrix}\right)_{#3}}}
\newcommand{\ra}{\rightarrow}
\newcommand{\Ex}{\mbox{\rm Ext}}
\newcommand{\X}{\mathcal{X}}
\newcommand{\W}{\mathcal{W}}
\newcommand{\Mod}{\mbox{\rm Mod}}
\newcommand{\Hom}{\mbox{\rm Hom}}
\begin{document}
\begin{center}
{\Large  \bf  Recollements induced by left Frobenius pairs}

\vspace{0.5cm}  Yajun Ma, Dandan Sun, Rongmin Zhu and Jiangsheng Hu$\footnote{Corresponding author}$
\\
\medskip
\end{center}

\bigskip
\centerline { \bf  Abstract}
 \leftskip10truemm \rightskip10truemm \noindent
Given a right exact functor from an abelian category into another abelian category, there is an associated
abelian category called the comma category of the functor.
 In this paper, we characterize when left Frobenius pairs (resp. strong left Frobenius pairs) in abelian categories can induce left Frobenius pairs (resp. strong left Frobenius pairs) in their comma categories. This leads to the construction of recollements of right triangulated categories (resp. triangulated categories) from the stable categories of left Frobenius pairs (resp. strong left Frobenius pairs). Applications are given to complete hereditary cotorsion pairs and Gorenstein projective objects.
\leftskip10truemm \rightskip10truemm \noindent
\hspace{0.5em} \\[2mm]
{\bf Keywords:} Abelian category; comma category; left Frobenius pair; recollement.\\
{\bf 2020 Mathematics Subject Classification:} 18G25; 18G10; 18A25.

\leftskip0truemm \rightskip0truemm

{\footnotesize\tableofcontents\label{contents}}

\section{Introduction}
Recollements of triangulated categories, introduced by Beilinson, Bernstein and Deligne in \cite{B}, have been
successfully applied to algebra, geometry, higher algebraic $K$-theory and so on. In the recent ten years,
there are a lot of interesting work on the constructions of recollements of (algebraic) triangulated categories such as derived categories of ordinary rings or differential graded rings, or more generally, stable
categories of Frobenius categories. This particularly includes the constructions of recollements of derived module categories from infinitely generated tilting modules and exact contexts (see \cite{Chen2012,Chen2021}) and the
ones of recollements of stable categories of monomorphism categories (see \cite{W,XZZ}).

In terms of relative generators and cogenerators in approximation theory, Becerril and coauthors  \cite{BMPS} introduced left Frobenius pairs and strong left Frobenius pairs, which are closely linked to the construction of exact model structures. Detailed definitions can be found in \cite[Definition 2.5]{BMPS}.
For any left Frobenius pair $(\mathcal{X},\mathcal{W})$, its stable category $\mathcal{X}/\mathcal{W}$ has a right triangulated structure which was introduced and studied by Beligiannis and Marmaridis in \cite{BM}, and if in addition $(\mathcal{X},\mathcal{W})$ is a strong left Frobenius pair, then its stable category $\mathcal{X}/\mathcal{W}$ has a triangulated structure (see Lemma \ref{lem:3.4} and Remark \ref{remark}).
A typical example is that for any Artin algebra $A$, the category of finitely generated left $A$-modules respect to the subcategory $\mathrm{inj}(A)$ of the injective objects form a left Frobenius pair, and it is a strong left Frobenius pair if and only if $A$ is selfinjective. More generally, left Frobenius pairs can be provided by complete hereditary cotorsion pairs of abelian categories in cotorion theory (see Lemma \ref{cotorsion pair}).

The purpose of the present paper is to give the construction of recollements of right triangulated categories (a generalization of triangulated categories) by addressing left Frobenius pairs in abelian categories.
To achieve this aim, we first give a general method to construct left Frobenius pairs of comma categories associated with right exact functors over abelian categories.

To state our main result more precisely, let us first introduce some definitions and notations.

$\bullet$ Let $\mathcal {X}$ be a subcategory of an abelian category $\mathscr{A}$. For convenience, we set
\begin{center}
{\vspace{2mm}
$\mathcal {X}^{\bot}:=\{M:\mbox{Ext}^i_\mathscr{A}(X,M)=0\mbox{ for every } X\in\mathcal {X} \mbox{~and all}~i\geq 1\}$;

\vspace{2mm}
$^{\bot}\mathcal {X}:=\{M:\mbox{Ext}^i_\mathscr{A}(M,X)=0\mbox{ for every } X\in\mathcal {X} \mbox{~and all} ~i\geq 1\}$.}
\end{center}

$\bullet$ Let $\mathcal{X}$ and $\mathcal{W}$ be two subcategories of an abelian category $\mathscr{A}.$
Then $\mathcal{W}$ is called a \emph{cogenerator} for $\mathcal{X}$ if $\mathcal{W}\subseteq \mathcal{X}$ and for any $X\in \mathcal{X}$ there exists an exact sequence
$0\ra X\ra W\ra X'\ra 0$ in $\mathscr{A}$ with $W\in \mathcal{W}$ and $X'\in \mathcal{X}.$
Moreover, $\mathcal{W}$ is called an \emph{injective cogenerator} for $\mathcal{X}$ if $\mathcal{W}\subseteq\mathcal{X}^{\perp}$ and $\mathcal{W}$ is a cogenerator for $\mathcal{X}$. Dually, we have the notions of generator and projective generator.

$\bullet$ Let $\mathscr{A}$ be an abelian category. Recall from \cite[Definition 2.5]{BMPS} that a pair $(\X,\W)$ is called a left \emph{Frobenius pair} in $\mathscr{A}$ if the following conditions are satisfied:
\begin{enumerate}
 \item $\X$ is closed under extensions, kernels of epimorphisms and direct summands;

\item $\mathcal{W}$ is closed under direct summands in $\mathscr{A}$;

\item $\mathcal{W}$ is an injective cogenerator for $\X$.
 \end{enumerate}

\noindent If in addition $\W$ is also a projective generator for $\X$, then we say that $(\X,\W)$ is a \emph{strong left Frobenius pair}.

$\bullet$ Recall that for any abelian categories $\mathscr{A}$ and $\mathscr{B}$  and any right exact functor $T:\mathscr{B}\rightarrow\mathscr{A}$,  there exists an abelian category, denoted by $(T\downarrow\mathscr{A})$, consists of objects all triples $\left(\begin{smallmatrix}A\\B\end{smallmatrix}\right)_{\varphi}$ where $\varphi: T(B)\rightarrow A$ is a morphism in $\mathscr{A}$. We note that this new abelian category is called by \emph{comma category} in \cite{FGR,Marmaridis}. Detail definitions can be found in Definition \ref{df:comma} below.
Examples of comma categories include but are not limited to: the category of modules or complexes over a triangular matrix ring, morphism category of an abelian category and so on (see Example \ref{exact} below).

$\bullet$ Let $\mathcal{X}$ be a subcategory of an abelian category $\mathscr{A}$.
We denote by $\widehat{\mathcal{X}}$ the class of objects $C$ such that
there exists an exact sequence $0\ra X_{n}\ra X_{n-1}\ra\cdots\ra X_{0}\ra C\ra 0$ with each $X_{i}\in \mathcal{X}$ for some $n$.

$\bullet$ Let $\mathcal{X}$ be a  subcategory of $\mathscr{A}$ and $\mathcal{Y}$ a subcategory of $\mathscr{B}.$ We need the following subcategory of $(T\downarrow\mathscr{A}),$ which was inspired by \cite{LZ}.
\begin{displaymath}
\mathfrak{B}^{\mathcal{X}}_{\mathcal{Y}}:=\{\left(\begin{smallmatrix}X\\Y\end{smallmatrix}\right)_{\varphi}\in (T\downarrow\mathscr{A})~|~ Y\in\mathcal {Y}, \ \varphi \ \mbox{is monic and Coker}\varphi\in\mathcal {X}\}.
\end{displaymath}

$\bullet$ Suppose that $T:\mathscr{B}\rightarrow \mathscr{A}$ is a right exact functor and $\mathscr{B}$ has enough projective objects. Following \cite{CE1956}, we can construct a left derived functor $\mathrm{L}_{n}T$ for any $n\geq 1$.
For an object $B$ in $\mathscr{B}$, one can choose a projective resolution of $B$
$$\mathbf{P}:\cdots\rightarrow P_{n+1}\xrightarrow{d_{n}} P_{n}\xrightarrow{d_{n-1}} P_{n-1}\rightarrow \cdots \xrightarrow{d_{1}} P_{1}\xrightarrow{d_{0}} P_{0}\rightarrow B\rightarrow 0$$
and define $(\mathrm{L}_{n}T)B=\mathrm{Ker}Td_{n-1}/\mathrm{Im}Td_{n}.$

From now on, let $T:\mathscr{B}\rightarrow \mathscr{A}$ be a right exact functor from
an abelian category $\mathscr{B}$ to another abelian category $\mathscr{A}$, where $\mathscr{B}$ is supposed to have enough projective objects.

Now, our first main result can be stated as follows.

\begin{thm}\label{thm:1.1} Let $(T\downarrow\mathscr{A})$ be the comma category of the functor $T$. Assume that $\mathcal{W}$ and $\mathcal{X}$ are subcategories of $\mathscr{A}$ and that $\mathcal{V}$ and $\mathcal{Y}$ are subcategories of $\mathscr{B}.$ If $T(\mathcal{V})\subseteq \widehat{\mathcal{W}}$ and $(\mathrm{L}_{n}T)\mathcal{Y}=0$ for all $n\geq 1$,
then the following statements hold.
\begin{enumerate}
 \item[{\rm (1)}] The pair $(\mathfrak{B}^{\mathcal{X}}_{\mathcal{Y}},\mathfrak{B}^{\mathcal{W}}_{\mathcal{V}})$ is a left Frobenius pair in $(T\downarrow\mathscr{A})$ if and only if $(\mathcal{X},\mathcal{W})$ and $(\mathcal{Y},\mathcal{V})$ are left Frobenius pairs in $\mathscr{A}$ and $\mathscr{B}$, respectively.

 \item[{\rm (2)}] If $T(\mathcal{Y})\subseteq\widehat{\mathcal{X}}$, then the pair $(\mathfrak{B}^{\mathcal{X}}_{\mathcal{Y}},\mathfrak{B}^{\mathcal{W}}_{\mathcal{V}})$ is a strong left Frobenius pair in $(T\downarrow\mathscr{A})$ if and only if $(\mathcal{X},\mathcal{W})$ and $(\mathcal{Y},\mathcal{V})$ are strong left Frobenius pairs in $\mathscr{A}$ and $\mathscr{B}$, respectively.

 \end{enumerate}
\end{thm}

As a consequence of Theorem \ref{thm:1.1}, we obtain  recollements of (right) triangulated categories from the stable categories of left Frobenius pairs.

\begin{thm}\label{thm:1.2} Let $(T\downarrow\mathscr{A})$ be the comma category of the functor $T$, let $\mathcal{W}$ and $\mathcal{X}$ be subcategories of $\mathscr{A}$, and let $\mathcal{V}$ and $\mathcal{Y}$ be subcategories of $\mathscr{B}$. Assume that $T(\mathcal{V})\subseteq \mathcal{W}$, $T(\mathcal{Y})\subseteq \mathcal{X}$ and  $(\mathrm{L}_{n}T)\mathcal{Y}=0$ for all $n\geq 1$.
\begin{enumerate}
\item[{\rm (1)}] If $(\mathcal{X},\mathcal{W})$ and $(\mathcal{Y},\mathcal{V})$ are left Frobenius pairs in $\mathscr{A}$ and $\mathscr{B}$, respectively, then there exists a recollement of right triangulated categories

\vspace{2mm}
$$\scalebox{0.85}{\xymatrixcolsep{3pc}\xymatrix{
  \mathcal{X}/\mathcal{W}\ar[rr] && \mathfrak{B}^{\mathcal{X}}_{\mathcal{Y}}/\mathfrak{B}^{\mathcal{W}}_{\mathcal{V}}\ar[rr] \ar@/^1.8pc/[ll]\ar@/_1.8pc/[ll] && \mathcal{Y}/\mathcal{V}, \ar@/^1.8pc/[ll]\ar@/_1.8pc/[ll]}}$$
\vspace{2mm}

\noindent where all the functors can be described explicitly.
\item [\rm (2)]  If $(\mathcal{X},\mathcal{W})$ and $(\mathcal{Y},\mathcal{V})$ are strong left Frobenius pairs in $\mathscr{A}$ and $\mathscr{B}$, respectively, then the recollement in $(1)$ is a recollement of triangulated categories.
\end{enumerate}
\end{thm}

Let us remark that left Frobenius pairs and comma categories are far generalizations of module categories respect to injective modules and module categories of triangular matrix rings. Our Theorem \ref{thm:1.2} provides a proper framework for  constructions of recollements by addressing left Frobenius pairs.
If we apply Theorem \ref{thm:1.2} to modules categories respect to injective modules and triangular matrix Artin algebras, then we recover \cite[Theorem 1.3]{XZZ}.

As pointed out in \cite{W}, Wang and Liu constructed recollements of triangulated categories from the stable categories of Frobenius categories in the context of comma categories associated with fully faithful exact functors over abelian categories. It should be pointed out that our Theorem \ref{thm:1.2} offers a different perspective. More precisely, we first construct left Frobenius pairs in the comma category $(T\downarrow\mathscr{A})$ from left Frobenius pairs in $\mathscr{A}$ and $\mathscr{B}$ under the mild condition on $T$, then construct recollements of triangulated categories from the stable categories of strong left Frobenius pairs. In fact, if $(\mathcal{X},\mathcal{W})$ is a strong left Frobenius pair, then $\mathcal{X}$ is a Frobenius category with projective-injective objects $\mathcal{W}$, and the converse is not true in general. As $T$ in our setting is not supposed to be a fully faithful exact functor, our Theorem \ref{thm:1.2}(2) cannot be recovered by \cite[Theorem 1.1]{W}. Moreover, it follows from Lemma \ref{cotorsion pair} that there are a large variety of examples of left Frobenius pairs which are not necessary Frobenius categories. Thanks to Theorem \ref{thm:1.2}(1), we can construct many recollements of right triangulated categories which cannot be obtained by \cite{W}.

The contents of this paper are arranged as follows.
In Section \ref{section2}, we give some terminologies and some preliminary results which are need for our proof.
In Section \ref{section3}, we study the construction of (strong) left Frobenius pairs in the comma category $(T\downarrow \mathscr{A})$ from (strong) left Frobenius pairs in $\mathscr{A}$ and $\mathscr{B}$, including Theorem \ref{thm:1.1}. In Section \ref{section4}, we first employ Theorem \ref{thm:1.1} to prove Theorem \ref{thm:1.2}, and then establish recollements of (right) triangulated categories by applying Theorem \ref{thm:1.2} to complete hereditary cotorsion pairs and Gorenstein projective objects.

\section{Preliminaries}\label{section2}
Throughout this paper, $\mathscr{A}$ and $\mathscr{B}$ are abelian categories with enough projective and injective objects. For any ring $R$, $R\text{-}\Mod~(\mathrm{resp}.~R\text{-}{\rm mod})$ is the category of  (resp. finitely generated) left $R$-modules and $\textrm{Ch}(R)$ is the category of complexes of left $R$-modules.
We use $_RM$ (resp. $M_R$) to denote a left (resp. right) $R$-module $M$, and the  projective, injective and flat dimensions of $_RM$ (resp. $M_R$) will be denoted by $\mathrm{pd}_RM$, $\mathrm{id}_RM$ and $\mathrm{fd}_RM$ (resp. $\mathrm{pd} M_R$, $\mathrm{id} M_R$ and $\mathrm{fd} M_R$), respectively. For unexplained ones, we refer the reader to \cite{FGR,Gober}.
\subsection{Comma categories}
Now we recall some notions and facts of comma categories, which can be found in \cite{FGR} and \cite{Marmaridis}.

\begin{df}\label{df:comma}\cite{FGR,Marmaridis}
Let $T:\mathscr{B}\rightarrow\mathscr{A}$ be a right exact functor. Then the \emph{comma category }$(T\downarrow\mathscr{A})$ is defined as follows:
 \begin{enumerate}
\item [{\rm (1)}] The objects are triples $\left(\begin{smallmatrix}A\\B\end{smallmatrix}\right)_{\varphi}$, with $A\in{\mathcal{\mathscr{A}}}$, $B\in{\mathcal{\mathscr{B}}}$ and $\varphi: T(B)\rightarrow A$ being a morphism in $\mathscr{A}$;

\item [{\rm (2)}] A morphism $\left(\begin{smallmatrix}a\\b\end{smallmatrix}\right):\left(\begin{smallmatrix}A\\B\end{smallmatrix}\right)_{\varphi}\rightarrow\left(\begin{smallmatrix}A'\\B' \end{smallmatrix}\right)_{\varphi'}$ is given by two morphisms $a:A\rightarrow A'$ in $\mathscr{A}$ and $b: B\rightarrow B'$ in $\mathscr{B}$ such that $\varphi' T(b)=a\varphi$.

 \end{enumerate}
 \indent If there is no possible confusion, we sometimes omit the morphism $\varphi$. Recall from \cite{FGR} that the comma category $(T\downarrow\mathscr{A})$ is indeed
an abelian category since the functor T is assumed to be right exact.
 \end{df}

\begin{rem}\label{remark:recollement}
For a comma category $(T\downarrow\mathscr{A}),$ we have the following functors.
\begin{enumerate}
\item [{\rm (1)}] The functor $T_{\mathscr{B}}:\mathscr{B}\ra (T\downarrow\mathscr{A})$ is defined on objects $Y\in \mathscr{B}$ by $T_{\mathscr{B}}(Y)=\ccc{T(Y)}{Y}{}$ and given a morphism $\beta:Y\ra Y'$ in $\mathscr{B}$ then $T_{\mathscr{B}}(\beta)=\ccc{T(\beta)}{\beta}{}$ is a morphism in $(T\downarrow\mathscr{A}).$

\item [{\rm (2)}] The functor $U_{\mathscr{B}}:(T\downarrow\mathscr{A})\ra \mathscr{B}$ is defined on objects  $\ccc{X}{Y}{\varphi}\in{(T\downarrow\mathscr{A})}$ by $U_{\mathscr{B}}(\ccc{X}{Y}{\varphi})=Y$ and given a morphism $\ccc{\alpha}{\beta}{}:\ccc{X}{Y}{\varphi}\ra \ccc{X'}{Y'}{\varphi'}$ in $(T\downarrow\mathscr{A})$ then $U_{\mathscr{B}}(\ccc{\alpha}{\beta}{})=\beta.$ Similarly, we can define the functor $U_{\mathscr{A}}:(T\downarrow\mathscr{A})\ra \mathscr{A}.$

\item [{\rm (3)}] The functor $Z_{\mathscr{B}}:\mathscr{B}\ra (T\downarrow\mathscr{A})$ is defined on objects $Y\in \mathscr{B}$ by $T_{\mathscr{B}}(Y)=\ccc{0}{Y}{}$ and given a morphism $\beta:Y\ra Y'$ in $\mathscr{B}$ then $Z_{\mathscr{B}}(\beta)=\ccc{0}{\beta}{}$ is a morphism in $(T\downarrow\mathscr{A}).$ Similarly, we can define the functor $Z_{\mathscr{A}}:\mathscr{A}\ra (T\downarrow\mathscr{A}).$

\item [{\rm (4)}] The functor $q:(T\downarrow\mathscr{A})\ra \mathscr{A}$ is defined on objects $\ccc{X}{Y}{\varphi}\in{(T\downarrow\mathscr{A})}$ by $q(\ccc{X}{Y}{\varphi})=\mathrm{Coker}\varphi$ and given a morphism $\ccc{\alpha}{\beta}{}:\ccc{X}{Y}{\varphi}\ra \ccc{X'}{Y'}{\varphi'}$ in $\mathscr{B}$ then $q(\ccc{\alpha}{\beta}{}):\mathrm{Coker}\varphi\ra \mathrm{Coker}\varphi'.$
 \end{enumerate}
 It is easy to check, see also \cite{Psaroudakis,Psaroudakis2018} that the above data define the following recollement of abelian categories
$$\scalebox{0.85}{\xymatrixcolsep{3pc}\xymatrix{
  \mathscr{A}\ar[rr]^{Z_{\mathscr{A}}} && (T\downarrow \mathscr{A})\ar[rr]^{U_{\mathscr{B}}} \ar@/^1.8pc/[ll]^{U_{\mathscr{A}}}\ar@/_1.8pc/[ll]_{q} && \mathscr{B}. \ar@/^1.8pc/[ll]^{Z_{\mathscr{B}}}\ar@/_1.8pc/[ll]_{T_{\mathscr{B}}}}}$$
\end{rem}

Next we give some examples of comma categories.
\begin{exa}\label{exact}{\rm
(1) Let $R$ and $S$ be two rings, ${_R}M_S$ an $R$-$S$-bimodule, and $\Lambda=\left(
                                                                                 \begin{smallmatrix}R & M \\0 & S\\\end{smallmatrix}
                                                                               \right)
$ the triangular matrix ring. If we define $T\cong M\otimes_{S}-: S\text{-}\Mod \ra R\text{-}\Mod$, then we get that ${\Lambda}\text{-}\Mod$ is equivalent to the comma category $(T\downarrow R\text{-}\Mod )$.

(2) Let $\Lambda=\left(\begin{smallmatrix}R & M \\0 & S\\\end{smallmatrix}\right)
$ be a triangular matrix ring.  If we define  {\rm$T\cong M\otimes_{S}-: \textrm{Ch}(S) \ra \textrm{Ch}(R)$}, then {\rm$\textrm{Ch}(\Lambda)$} is equivalent to the comma category $(T\downarrow {\rm\textrm{Ch}}(R))$.

(3) If $\mathscr{A}=\mathscr{B}$ and $T$ is the identity functor, then the comma category $(T\downarrow \mathscr{A})$ coincides with the
morphism category ${\rm mor}(\mathscr{A})$ of $\mathscr{A}$.

(4) Let  $\mathscr{A}=R$-$\mathrm{Mod}$ and $\mathscr{B}=$Ch$(R)$. If we define $e: \mathscr{B}\rightarrow\mathscr{A}$ via $C^\bullet\mapsto C^0$ for any $C^\bullet\in{\mathscr{B}}$, then $e$ is an exact functor and we have a comma category $(e\downarrow\mathscr{A})$.
}\end{exa}

\subsection{Cotorsion pairs}

Recall that a \emph{cotorsion pair} in $\mathscr{A}$ is a pair of classes $(\mathcal{X},\mathcal{Y})$ of objects in $\mathscr{A}$ satisfying
the following two conditions:
(i) $X\in \mathcal{X}$ if and only if $\Ex^{1}
_{\mathscr{A}} (X,Y)=0$ for all $Y\in\mathcal{Y}$;
(ii) $Y\in \mathcal{Y}$ if and only if $\Ex^{1}
_{\mathscr{A}}(X,Y)=0$ for all $X\in \mathcal{X}$.
The cotorsion pair is called \emph{complete} if, for any $A\in \mathscr{A}$, there exists an exact sequence
$0\rightarrow Y\rightarrow X\rightarrow A\rightarrow 0$ with $X\in \mathcal{X}$ and $Y\in \mathcal{Y}$, and another exact sequence $0 \rightarrow A\rightarrow Y'
\rightarrow X'\rightarrow 0$ with $X'\in \mathcal{X}$ and $Y'\in \mathcal{Y}$.
We say that the cotorsion pair $(\mathcal{X},\mathcal{Y})$ is \emph{hereditary} if $\Ex^{i\geq 1}_{\mathscr{A}}(\mathcal{X},\mathcal{Y})=0.$
By a \emph{projective cotorsion pair} in $\mathscr{A}$, we
mean a complete cotorsion pair $(\mathcal{C},\mathcal{W})$ for which $\mathcal{W}$ is thick (meaning it satisfies the 2 out of 3 property on short exact sequences) and $\mathcal{C}\cap \mathcal{W}$ is the class of
projective objects; see \cite{Gillespie2016}. It is easy to check that every projective cotorsion pair is hereditary.
The book \cite{EJ2} is a standard reference for cotorsion pairs.

The follow result indicates that left Frobenius pairs can be provided
by complete hereditary cotorsion pairs of abelian categories in cotorion theory.
\begin{lem}\label{cotorsion pair}
Let $(\mathcal{X},\mathcal{Y})$ be a complete hereditary cotorsion pair in $\mathscr{A}$. Then
\begin{enumerate}
\item [\rm{(1)}] $(\mathcal{X},\mathcal{X}\cap\mathcal{Y})$ is a left Frobenius pair in $\mathscr{A}$.
\item [\rm{(2)}] $(\mathcal{X},\mathcal{X}\cap\mathcal{Y})$ is a  strong left Frobenius pair in $\mathscr{A}$ if and only if $(\mathcal{X},\mathcal{Y})$ is a projective cotorsion pair.
\end{enumerate}
\end{lem}
\begin{proof}
(1). Since $(\mathcal{X},\mathcal{Y})$ is a complete hereditary cotorsion pair, it is easy to check that $\mathcal{X}$ is closed under extensions, kernels of epimorphisms and direct summands, and $\mathcal{X}\cap\mathcal{Y}$ is closed under direct summands.
Let $X$ be in $\mathcal{X}$.
Then there exists an exact sequence
$0\ra X\ra Y\ra X'\ra 0$ with $Y\in \mathcal{Y}$ and $X'\in \mathcal{X}$.
Since $\mathcal{X}$ is closed under extensions, $Y\in \mathcal{X}$.
So $\mathcal{X}\cap\mathcal{Y}$ is a cogenerator for $\mathcal{X}.$
Note that $\mathrm{Ext}_{\mathscr{A}}^{i\geq 1}(\mathcal{X},\mathcal{X}\cap\mathcal{Y})=0$.
Thus $(\mathcal{X},\mathcal{X}\cap\mathcal{Y})$ is a left Frobenius pair in $\mathscr{A}$.

(2) $``\Rightarrow"$. Let $P$ be a projective object of $\mathscr{A}$.
Then $P\in \mathcal{X}$ as $(\mathcal{X},\mathcal{Y})$ is a cotorsion pair.
Since $(\mathcal{X},\mathcal{X}\cap\mathcal{Y})$ is a left Frobenius pair in $\mathscr{A}$, we have an exact sequence
$0\ra X\ra W\ra P\ra 0$ with $W\in \mathcal{X}\cap\mathcal{Y}$ and $X\in \mathcal{X}.$
Thus $W\cong X\oplus P.$
Hence $P\in \mathcal{X}\cap\mathcal{Y}$ as $\mathcal{X}\cap\mathcal{Y}$ is closed under direct summands.
For any $Z\in \mathcal{X}\cap\mathcal{Y},$ there exists an exact sequence
$0\ra X'\ra P\ra Z\ra 0$ with $P$ projective as $\mathscr{A}$ has enough projective objects.
Since $\mathcal{X}$ is closed under kernels of epimorphisms, $X'\in \mathcal{X}.$
Thus $\mathrm{Ext}_{\mathscr{A}}^{1}(Z,X')=0.$
So $Z$ is projective.
It follows that $\mathcal{X}\cap\mathcal{Y}$ is the class of
projective objects.
Therefore $(\mathcal{X},\mathcal{Y})$ is a projective cotorsion pair by \cite[Proposition 3.7]{Gillespie2016}.

 $``\Leftarrow"$. Suppose that $(\mathcal{X},\mathcal{Y})$ is a projective cotorsion pair in $\mathscr{A}$, it suffices to show that $\mathcal{X}\cap\mathcal{Y}$ is a projective generator for $\mathcal{X}$.
Note that $\mathcal{X}\cap \mathcal{Y}$ is the class of projective objects. It follows that $\mathrm{Ext}_{\mathscr{A}}^{i\geq 1}(\mathcal{X}\cap\mathcal{Y},\mathcal{X})=0$.
Let $X\in \mathcal{X}.$ Since $\mathscr{A}$ has enough projective objects, there exists an exact sequence
$0\ra K\ra P\ra X\ra 0$ with $P$ projective.
Since $\mathcal{X}$ is closed under kernels of epimorphisms, $K\in \mathcal{X}.$
Thus $\mathcal{X}\cap\mathcal{Y}$ is a projective generator for $\mathcal{X}.$
\end{proof}

\subsection{Gorenstein objects}
Recall that  an object $M$ in $\mathscr{A}$ is called \emph{Gorenstein projective} if $M=\textrm{Z}^{0}(P^\bullet)$ for some exact complex $P^\bullet$ of projective objects which remains exact after applying ${\rm Hom}_{\mathscr{A}}(-,P)$ for any projective object $P$. Let $\mathcal{GP}_\mathscr{A}$ (resp. $\mathcal{P}_\mathscr{A}$) be the subcategory of $\mathscr{A}$ consisting of Gorenstein projective objects (resp. projective objects).  For any ring $A$, we denote by $\mathcal{GP}(A)$ (resp. $\mathcal{P}(A)$) the subcategory of $A\text{-}\Mod$ consisting of Gorenstein projective modules (resp. projective modules).
 We have that $(\mathcal{GP}_\mathscr{A},\mathcal{P}_\mathscr{A})$ is a strong left Frobenius pair and its proof is similar to that of Proposition 6.1 in \cite{BMPS}.

For any associative ring $R$ with identity,
a left $R$-module $N$ is called \emph{Gorenstein flat}  \cite{EJ2,Holm}~if there is an exact sequence of flat left $R$-modules
$$\mathbf{F}= \cdots\rightarrow F_{1}\rightarrow F_{0}\rightarrow F^{0}\rightarrow F^{1}\rightarrow\cdots$$
with $N=\mathrm{Ker}(F_{0}\rightarrow F^{0})$ such that $Q\otimes_{R}\mathbf{F}$ is exact for any injective right $R$-module $Q$.
  Let $\mathcal{GF}(R)$ be the full subcategory of $R$-$\mathrm{Mod}$ consisting of all Gorenstein flat modules.
 It is well known that $(\mathcal{F}(R),\mathcal{C}(R))$, where $\mathcal{F}(R)$ is the class of flat left $R$-modules and $\mathcal{C}(R)=\mathcal{F}(R)^{\perp}$ is the class of cotorsion left $R$-modules, is a complete hereditary cotorsion pair. Recently $\mathrm{\check{S}}$aroch and $\mathrm{\check{S}\acute{t}}$ov$\mathrm{\acute{\i}\check{c}}$ek have shown in \cite{JJarxiv} that $(\mathcal{GF}(R),\mathcal{GF}(R)^{\perp})$ is a complete hereditary cotorsion pair regardless of the ring $R$  and $\mathcal{GF}(R) \cap\mathcal{GF}(R)^{\perp}=\mathcal{F}(R)\cap \mathcal{C}(R).$
Thus $(\mathcal{GF}(R),\mathcal{F}(R)\cap \mathcal{C}(R))$ and $(\mathcal{F}(R),\mathcal{F}(R)\cap \mathcal{C}(R))$ are left Frobenius pairs by Lemma \ref{cotorsion pair}.

\subsection{Right triangulated categories}
We first introduce the definitions of right triangle functors and recollements of right triangulated categories.
\begin{df} \cite{BM,BR}
Let $\mathscr{C}$ be an additive category with an additive endofunctor $\Sigma: \mathscr{C} \rightarrow \mathscr{C}$.
Consider in $\mathscr{C}$ the category $\mathcal{RT}(\mathscr{C},\Sigma)$ whose objects are diagrams of the form
 $A\xrightarrow{f}B\xrightarrow{g}C\xrightarrow{h} \Sigma A$
with $A,B,C \in \mathscr{C}$ and morphisms indicated by the following commutative diagram:
$$\xymatrix{
A\ar[r]^f\ar[d]_\alpha&B\ar[r]^g\ar[d]_\beta&C\ar[r]^h\ar[d]_\gamma&\Sigma A\ar[d]_{\Sigma(\alpha)}\\
A'\ar[r]^{f'}&B'\ar[r]^{g'}&C'\ar[r]^{h'}&\Sigma A'
.}$$
A \emph{right triangulation} of the pair $(\mathscr{C},\Sigma)$ is a full subcategory $\Delta$ of $\mathcal{RT}(\mathscr{C},\Sigma)$ which satisfies all the axioms of a triangulated category, except that $\Sigma$ is not necessarily an equivalence.
Then the triple $(\mathscr{C},\Sigma,\Delta)$ is called a \emph{right triangulated category}, $\Sigma$ is the \emph{suspension functor}
and the diagrams in $\Delta$ are the \emph{right triangles}.
\end{df}
\begin{df}
A covariant functor $F:\mathscr{C}\ra \mathscr{C}'$ is said to be a right triangle functor from the right triangulated category $(\mathscr{C},\Sigma,\Delta)$ to the right triangulated category $(\mathscr{C}',\Sigma',\Delta')$, if $\varphi:F\Sigma\ra \Sigma'F$ is a natural isomorphism and for any right triangle $A\xrightarrow{f}B\xrightarrow{g}C\xrightarrow{h} \Sigma A$ of $\Delta$, then $F(A)\xrightarrow{F(f)}F(B)\xrightarrow{F(g)}F(C)\xrightarrow{\varphi_{A}F(h)} \Sigma' F(A)$ is a right triangle of $\Delta'$.
\end{df}

\begin{rem}
The right triangle functors between triangulated categories are precisely triangle functors between triangulated categories.
\end{rem}

\begin{df} A recollement situation
 between right triangulated categories $\mathscr{A}, \mathscr{C}$
and $\mathscr{B}$ is a diagram
~$$\scalebox{0.85}{\xymatrixcolsep{3pc}\xymatrix{
  \mathscr{A}\ar[rr]^{i} && \mathscr{C}\ar[rr]^{e} \ar@/^1.6pc/[ll]^{p}\ar@/_1.6pc/[ll]_{q} && \mathscr{B} \ar@/^1.6pc/[ll]^{r}\ar@/_1.6pc/[ll]_{l} } }$$
of right triangle functors satisfying the following conditions:
\begin{enumerate}
\item [\rm{(1)}] $(q,i)$ $(i,p)$, $(l,e)$ and $(e,,r)$ are adjoint pairs;

\item [\rm{(2)}] The functors $i$, $l$ and $r$ are fully faithful;

\item [\rm{(3)}] $\mathrm{Im}i=\mathrm{Ker}e$.
\end{enumerate}
\end{df}

In what follows, for any left Frobenius pair $(\mathcal{X},\mathcal{W})$, we always denote by $\overline{\mathcal{X}}=\mathcal{X}/\mathcal{W}$ the stable category of $\mathcal{X}$, the objects of $\overline{\mathcal{X}}$ are objects of $\mathcal{X}$, and $\Hom_{\overline{\mathcal{X}}}(X,Y)=\Hom_{\mathcal{X}}(X,Y)/\mathcal{W}(X,Y),$ where $\mathcal{W}(X,Y)$ is the subgroup of $\Hom_{\mathcal{X}}(X,Y)$ consisting of those morphisms factoring through objects of $\mathcal{W}.$

\begin{lem}\label{lem:3.4}
If $(\mathcal{X},\mathcal{W})$ is a left Frobenius pair in $\mathscr{A}$, then $\overline{\mathcal{X}}$  has a right triangulated structure.
\end{lem}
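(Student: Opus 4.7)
The plan is to invoke the construction of Beligiannis--Marmaridis \cite{BM} essentially verbatim, observing that the three axioms of a left Frobenius pair are exactly the ingredients needed. More precisely, the closure of $\mathcal{X}$ under extensions, kernels of epimorphisms and direct summands makes $\mathcal{X}$ into what \cite{BM} call a ``left Frobenius subcategory'' of $\mathscr{A}$ (in their axiomatic sense), and the fact that $\mathcal{W}$ is an injective cogenerator plays the role of the relative ``injective class'' needed for the construction.

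First I would define the suspension functor $\Sigma\colon\overline{\mathcal{X}}\to\overline{\mathcal{X}}$. For each $X\in\mathcal{X}$, the cogenerator property of $\mathcal{W}$ gives a short exact sequence
$$0\longrightarrow X\longrightarrow W(X)\longrightarrow \Sigma X\longrightarrow 0$$
with $W(X)\in\mathcal{W}$ and $\Sigma X\in\mathcal{X}$; such a sequence is chosen once and for all. Given a morphism $f\colon X\to X'$ in $\mathcal{X}$, the condition $\mathcal{W}\subseteq\mathcal{X}^{\perp}$ yields $\Ex^{1}_{\mathscr{A}}(\Sigma X,W(X'))=0$, so $f$ lifts to a morphism of chosen conflations, inducing a map $\Sigma f\colon\Sigma X\to\Sigma X'$. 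Two such lifts differ by a morphism factoring through $W(X')\in\mathcal{W}$, so $\Sigma f$ is well-defined in $\overline{\mathcal{X}}$; functoriality and additivity are then routine.

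Next I would declare the \emph{standard right triangles}: every short exact sequence $0\to X\xrightarrow{f} Y\xrightarrow{g} Z\to 0$ in $\mathscr{A}$ with $X,Y,Z\in\mathcal{X}$ produces, via the $\Ex^{1}$-vanishing $\Ex^{1}_{\mathscr{A}}(Z,W(X))=0$, a commutative diagram
$$\xymatrix{0\ar[r]&X\ar@{=}[d]\ar[r]^{f}&Y\ar[d]\ar[r]^{g}&Z\ar[d]^{h}\ar[r]&0\\ 0\ar[r]&X\ar[r]&W(X)\ar[r]&\Sigma X\ar[r]&0}$$
and hence a candidate $X\xrightarrow{\overline{f}}Y\xrightarrow{\overline{g}}Z\xrightarrow{\overline{h}}\Sigma X$ in $\overline{\mathcal{X}}$. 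Here the third term $\Sigma X$ lies in $\mathcal{X}$ precisely because $\mathcal{X}$ is closed under kernels of epimorphisms (applied to $W(X)\twoheadrightarrow\Sigma X$... in fact cokernels, provided via the chosen sequence). The class $\Delta$ consists of all diagrams isomorphic in $\overline{\mathcal{X}}$ to a standard one. Then I would verify the axioms (TR1)--(TR4) of a right triangulated category: (TR1) uses the split exact sequence $0\to X\to X\oplus W(X)\to W(X)\to 0$ to embed any morphism into a triangle; (TR2) rotation is handled by forming the pushout of $f$ along $X\to W(X)$, whose closure under extensions places all terms in $\mathcal{X}$; (TR3) uses the lifting property of $\mathcal{W}\subseteq\mathcal{X}^{\perp}$; (TR4) the octahedral axiom is produced by the usual $3\times 3$ diagram and reduces again to closure of $\mathcal{X}$ under extensions and kernels of epimorphisms.

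The main obstacle, were one to write this out, is the verification of (TR2) and (TR4): one must check that rotating a triangle still yields a conflation in $\mathcal{X}$ (this is precisely where closure of $\mathcal{X}$ under kernels of epimorphisms is used) and that the octahedral diagram can be completed within $\mathcal{X}$. Since both arguments are carried out in detail in \cite{BM} under the very same hypotheses we have assumed, no new ideas are required and the lemma follows by citation.
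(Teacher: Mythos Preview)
Your approach is correct and, like the paper, rests on the Beligiannis--Marmaridis machinery, but the execution differs in a way worth noting. The paper does not re-sketch the construction of $\Sigma$ and the triangle axioms; instead it invokes the precise criterion of \cite[Theorem~2.12]{BM}, which reduces everything to a single condition: whenever $f\colon X_{1}\to X_{2}$ is a morphism in $\mathcal{X}$ with $\Hom_{\mathscr{A}}(f,W)$ surjective for all $W\in\mathcal{W}$, one must have $\coker f\in\mathcal{X}$. The paper verifies this by first observing that such an $f$ is automatically monic (factor the embedding $X_{1}\hookrightarrow Q$, $Q\in\mathcal{W}$, through $f$), and then invoking the Auslander--Buchweitz identity $\widehat{\mathcal{X}}\cap{}^{\perp}\mathcal{W}=\mathcal{X}$ from \cite[Proposition~2.14]{BMPS} to conclude $\coker f\in\mathcal{X}$.

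Your direct construction sidesteps this extra input: once the cone of an arbitrary $f\colon X\to Y$ is built as the pushout along $X\hookrightarrow W(X)$, the short exact sequence $0\to Y\to C_{f}\to\Sigma X\to 0$ places $C_{f}$ in $\mathcal{X}$ by closure under extensions alone, and the rest of (TR1)--(TR4) follows as you indicate. So your route is arguably more elementary. The only caution is your final sentence: the hypotheses in \cite{BM} are \emph{not} literally the axioms of a left Frobenius pair but rather the cokernel criterion the paper checks, so saying the arguments are carried out there ``under the very same hypotheses'' is a slight overstatement; you have in fact supplied the missing verification yourself via the extension argument.
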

\begin{proof} By \cite[Theorem 2.12]{BM}, it suffices to show that for any morphism $f:X_{1}\rightarrow X_{2}$, if $\Hom_{\mathscr{A}}(f,W):\Hom_{\mathscr{A}}(X_{2},W)\rightarrow \Hom_{\mathscr{A}}(X_{1},W)$ is epic for any $W\in \mathcal{W}$, then $\mathrm{Coker} f\in{\mathcal{X}}$.

Note that $\mathcal{W}$ is an injective cogenerator for $\mathcal{X}$. Then
there exists an exact sequence $0\ra X_{1}\xrightarrow{g}Q\xrightarrow{h}X'\ra 0$ in $\mathscr{A}$ with $Q\in \mathcal{W}$ and $X'\in \mathcal{X}.$
Thus there exists a morphism $k:X_{2}\ra W$ with $g=kf$, and hence $f$ is monic as $g$ is monic.
By the exactness of the sequence $0\ra X_{1}\xrightarrow{f} X_{2}\xrightarrow{\pi}\mathrm{Coker} f\ra 0$, we have an exact sequence $\Hom_{\mathscr{A}}(X_{2},W)\rightarrow \Hom_{\mathscr{A}}(X_{1},W)\ra \Ex^{1}_{\mathscr{A}}(\mathrm{Coker} f,W)\ra 0$ for any $W\in{\mathcal{W}}$.
So $\mathrm{Coker} f\in \widehat{\mathcal{X}}\cap{^{\perp}\mathcal{W}}=\mathcal{X}$ by \cite[Proposition 2.14]{BMPS}, as desired.
\end{proof}

\begin{rem}\label{remark:right triangulated categories}
Let $(\mathcal{X},\mathcal{W})$ be a left Frobenius pair in $\mathscr{A}$.
For any $X\in\mathcal{X}$, we have an exact sequence $0\ra X\ra W\ra X'\ra 0$ in $\mathscr{A}$ with $W\in \mathcal{W}$ and $X'\in \mathcal{X}$. Define $\Sigma(X)=X'$ to be the image of $X'$ in $\overline{\mathcal{X}}$.
Then $\Sigma:\overline{\mathcal{X}}\ra \overline{\mathcal{X}}$ is an additive functor.
 Let $0\ra X_{1}\xrightarrow{f}X_{2}\xrightarrow{g}X_{3}\ra 0$ be an exact sequence in $\mathcal{X}$. For any exact sequence $0\ra X_{1}\xrightarrow{h}W_{1}\xrightarrow{l}\Sigma X_{1}\ra 0$ with $W_{1}\in\mathcal{W}$ and $\Sigma X_{1}\in \mathcal{X},$
there exists the following commutative diagram
$$\xymatrix{0\ar[r]&X_{1}\ar[r]^{f}\ar@{=}[d]&X_{2}\ar[r]^{g}\ar[d]^{a}&X_{3}\ar[r]\ar[d]^{b}&0\\
0\ar[r]&X_{1}\ar[r]^{h}&W_{1}\ar[r]^{l}&\Sigma X_{1}\ar[r]&0
}
$$
as $\Ex_{\mathscr{A}}^{1}(X_{3},W_{1})=0.$
Hence the sequence
$\xymatrix@C=0.6cm{X_{1}\ar[r]^{\overline{f}}&X_{2}\ar[r]^{\overline{g}}&X_{3}\ar[r]^{\overline{b}}&\Sigma X_{1}}$
in $\overline{\mathcal{X}}$ is called a \emph{distinguished right triangle}. The right triangles in $\overline{\mathcal{X}}$ are defined as the sequences which are isomorphic to some  distinguished right triangles obtained in this way by \cite{BM}.
\end{rem}
\begin{rem}\label{remark}
If $(\mathcal{X},\mathcal{W})$ is a strong left Frobenius pair in $\mathscr{A}$, then $\mathcal{X}$ is a Frobenius category with projective-injective objects $\mathcal{W}$.
Thus the stable category $\overline{\mathcal{X}}$ is triangulated by \cite{Happel1988}.
\end{rem}

\section{Left Frobenius pairs and comma categories}\label{section3}
In this section, we construct (strong) left Frobenius pairs in the comma category $(T\downarrow \mathscr{A})$ from (strong) left Frobenius pairs in abelian categories $\mathscr{A}$ and $\mathscr{B}$, which contains Theorem \ref{thm:1.1} in the introduction.

We assume that $\mathcal{W}$ and $\mathcal{X}$ are two subcategories of $\mathscr{A}$, and $\mathcal{V}$ and $\mathcal{Y}$ are two subcategories of $\mathscr{B}.$
Recall from the introduction that  $\mathrm{L}_{n}T$ is the \emph{nth left derived functor} of $T$ for any integer $n\geq 1$. The following Ext formulas for comma categories are essentially taken from \cite{FGR}.
They were also proved in a much more general context by \cite[Corollary 4.2]{B2000}.

\begin{lem}\label{lem:extension group}
Let $(T\downarrow \mathscr{A})$ be a comma category. Then the following statements hold.
\begin{enumerate}
\item [\rm{(1)}]If $(\mathrm{L}_{n}T)Y=0$ for all $n\geq 1,$
then $\Ex^{i}_{(T\downarrow\mathscr{A})}(\ccc{T(Y)}{Y}{},\ccc{X_{1}}{X_{2}}{\varphi})\cong\Ex^{i}_{\mathscr{B}}(Y,X_{2})$ for all $i\geq 1$.

\item [\rm{(2)}] $\Ex^{i}_{(T\downarrow\mathscr{A})}(\ccc{X}{0}{},\ccc{N_{1}}{N_{2}}{\varphi})\cong\Ex^{i}_{\mathscr{A}}(X,N_{1})$ for all $i\geq 1$.
\end{enumerate}
\end{lem}

Let $\mathcal{X}$ be a subcategory of $\mathscr{A}$. Recall from the introduction that
$\widehat{\mathcal{X}}$ is the class of objects $C$ such that
there exists an exact sequence $0\ra X_{n}\ra X_{n-1}\ra\cdots\ra X_{0}\ra C\ra 0$ with each $X_{i}\in \mathcal{X}$ for some $n$.

In the following lemma, we collect some results from \cite{AB} which were used
in the proof of Proposition \ref{left Frobenius pair}.

\begin{lem}\label{main result of AB} Let $(\mathcal{X},\mathcal{W})$ be a left Frobenius pair in $\mathscr{A}$. Then the following statements hold.
\begin{enumerate}
\item [\rm{(1)}] \cite[Theorem 1.1]{AB}  For each $C\in\widehat{\mathcal{X}}$, there is an exact sequence
$0\ra Y_{C}\ra X_{C}\ra C\ra0$
with $X_{C}\in \mathcal{X}$ and $Y_{C}\in \widehat{\mathcal{W}}.$

\item [\rm{(2)}] \cite[Lemma 3.1]{AB}  The category $\widehat{\mathcal{X}}$ is closed under extensions.
\item [\rm{(3)}] \cite[Proposition 3.6]{AB}  $\widehat{\mathcal{X}}\cap \mathcal{X}^{\perp}=\widehat{\mathcal{W}}.$
    In particular, $\widehat{\mathcal{W}}$ is closed under extensions.

\item [\rm{(4)}] \cite[Theorem 3.7]{AB}  $\mathcal{X}\cap \mathcal{X}^{\perp}=\mathcal{X}\cap\widehat{\mathcal{W}}=\mathcal{W}.$ In particular, $\mathcal{W}$ is closed under finite direct sums.
\end{enumerate}
\end{lem}

The following result contains Theorem \ref{thm:1.1}(1) in the introduction.
\begin{prop}\label{left Frobenius pair} Let $(T\downarrow\mathscr{A})$ be a comma category. Assume that $T(\mathcal{V})\subseteq \widehat{\mathcal{W}}$ and $(\mathrm{L}_{n}T)\mathcal{Y}=0$ for all $n\geq 1$.
 The following conditions are equivalent:
\begin{enumerate}
 \item [\rm{(1)}] $(\mathcal{X},\mathcal{W})$ and $(\mathcal{Y},\mathcal{V})$ are left Frobenius pairs in $\mathscr{A}$ and $\mathscr{B}$, respectively.

 \item [\rm{(2)}] $(\mathfrak{B}^{\mathcal{X}}_{\mathcal{Y}},\mathfrak{B}^{\mathcal{W}}_{\mathcal{V}})$ is a left Frobenius pair in $(T\downarrow\mathscr{A})$.
 \end{enumerate}
\end{prop}
\begin{proof}
$(1)\Rightarrow (2)$. It is easy to check that $\mathfrak{B}^{\mathcal{X}}_{\mathcal{Y}}$ and $\mathfrak{B}^{\mathcal{W}}_{\mathcal{V}}$ are closed under direct summands.
Let $0\ra\ccc{X_{1}}{Y_{1}}{\varphi_{1}}\ra\ccc{X_{2}}{Y_{2}}{\varphi_{2}}\ra\ccc{X_{3}}{Y_{3}}{\varphi_{3}}\ra0$ be an exact sequence in $(T\downarrow\mathscr{A})$ with $\ccc{X_{3}}{Y_{3}}{\varphi_{3}}\in \mathfrak{B}^{\mathcal{X}}_{\mathcal{Y}}$. Note that $(\mathrm{L}_{1}T)Y_{3}=0$ by hypothesis.
Then we have the following exact commutative diagram in $\mathscr{A}$
$$\xymatrix{&  & &0\ar[d]&\\
 0\ar[r]&T(Y_{1})\ar[r]^{}\ar[d]_{\varphi_{1}}&T(Y_{2})\ar[r]^{}\ar[d]_{\varphi_{2}}&T(Y_{3})\ar[r]\ar[d]_{\varphi_{3}}&0\\
 0\ar[r]&X_{1}\ar[r]^{}\ar[d]&X_{2}\ar[r]\ar[d]&X_{3}\ar[r]\ar[d]&0\\
 0\ar[r]&\mathrm{Coker} \varphi_{1}\ar[r]^{}\ar[d]&\mathrm{Coker} \varphi_{2}\ar[r]\ar[d]&\mathrm{Coker}\varphi_{3}\ar[r]\ar[d]&0\\
 &0&0&\;0.&}$$
By the Snake Lemma, $\ccc{X_{1}}{Y_{1}}{\varphi_{1}}\in \mathfrak{B}^{\mathcal{X}}_{\mathcal{Y}}$ if and only if $\ccc{X_{2}}{Y_{2}}{\varphi_{2}}\in \mathfrak{B}^{\mathcal{X}}_{\mathcal{Y}}$.
Hence $\mathfrak{B}^{\mathcal{X}}_{\mathcal{Y}}$ is closed under extensions and kernels of epimorphisms.

Next, we claim that $\mathfrak{B}^{\mathcal{W}}_{\mathcal{V}}$ is an injective cogenerator for $\mathfrak{B}^{\mathcal{X}}_{\mathcal{Y}}$.
Let $\ccc{X}{Y}{\varphi}\in \mathfrak{B}^{\mathcal{X}}_{\mathcal{Y}}$ and
$\ccc{W}{V}{\psi}\in \mathfrak{B}^{\mathcal{W}}_{\mathcal{V}}$. Then we have an exact sequence
$$0\ra\ccc{T(Y)}{Y}{}\ra\ccc{X}{Y}{\varphi}\ra\ccc{{\mathrm{Coker}} {\varphi}}{0}{}\ra 0,$$
which induces an exact sequence
$$\Ex^{i}_{(T\downarrow\mathscr{A})}(\ccc{\mathrm{Coker} {\varphi}}{0}{},\ccc{W}{V}{\psi})\ra
\Ex^{i}_{(T\downarrow\mathscr{A})}(\ccc{X}{Y}{\varphi},\ccc{W}{V}{\psi})\ra
\Ex^{i}_{(T\downarrow\mathscr{A})}(\ccc{T(Y)}{Y}{},\ccc{W}{V}{\psi})$$ for all $i\geq 1.$
Since $\mathcal{V}$ is an injective cogenerator for $\mathcal{Y},$  we have $$\Ex^{i}_{(T\downarrow\mathscr{A})}(\ccc{T(Y)}{Y}{},\ccc{W}{V}{\psi})\cong\Ex^{i}_{\mathscr{B}}(Y,V)=0$$ for all $i\geq 1$ by Lemma \ref{lem:extension group}.
We need to show that $\Ex^{i}_{(T\downarrow\mathscr{A})}(\ccc{\mathrm{Coker} \varphi}{0}{},\ccc{W}{V}{\psi})=0$ for all $i\geq 1.$
Note that $\ccc{W}{V}{\psi}\in\mathfrak{B}^{\mathcal{W}}_{\mathcal{V}}$. Then we have an exact sequence
$$0\ra\ccc{T(V)}{V}{}\ra\ccc{W}{V}{\psi}\ra\ccc{\mathrm{Coker} \psi}{0}{}\ra 0,$$
which induces an exact sequence
$$\Ex^{i}_{(T\downarrow\mathscr{A})}(\ccc{\mathrm{Coker} \varphi}{0}{},\ccc{T(V)}{V}{})\ra
\Ex^{i}_{(T\downarrow\mathscr{A})}(\ccc{\mathrm{Coker} \varphi}{0}{},\ccc{W}{V}{\psi})\ra
\Ex^{i}_{(T\downarrow\mathscr{A})}(\ccc{\mathrm{Coker} \varphi}{0}{},\ccc{\mathrm{Coker} \psi}{0}{})$$
for all $i\geq 1.$
Since $\mathrm{Coker} \varphi\in \mathcal{X},$ $\mathrm{Coker} \psi\in \mathcal{W}$ and $T(\mathcal{V})\subseteq \widehat{\mathcal{W}}$,
it follows from Lemma \ref{lem:extension group} that
 $$\Ex^{i}_{(T\downarrow\mathscr{A})}(\ccc{\mathrm{Coker} \varphi}{0}{},\ccc{T(V)}{V}{})\cong
\Ex^{i}_{\mathscr{A}}(\mathrm{Coker} \varphi,T(V))=0$$ and
$$\Ex^{i}_{(T\downarrow\mathscr{A})}(\ccc{\mathrm{Coker} \varphi}{0}{},\ccc{\mathrm{Coker} \psi}{0}{})\cong
\Ex^{i}_{\mathscr{A}}(\mathrm{Coker} \varphi,\mathrm{Coker} \psi)=0$$
for all $i\geq 1$.
So $\Ex^{i}_{(T\downarrow\mathscr{A})}(\ccc{\mathrm{Coker} \varphi}{0}{},\ccc{W}{V}{\psi})=0$ for all $i\geq 1$.
Thus $\Ex^{i}_{(T\downarrow\mathscr{A})}(\ccc{X}{Y}{\varphi},\ccc{W}{V}{\psi})=0$ for all $i\geq 1$, and hence $\mathfrak{B}^{\mathcal{W}}_{\mathcal{V}}\subseteq (\mathfrak{B}^{\mathcal{X}}_{\mathcal{Y}})^{\perp}$.

 Since
 $Y$ is in $\mathcal{Y}$, we have an exact sequence $0\ra Y\ra V'\ra Y'\ra 0$ in $\mathscr{B}$ with $V'\in\mathcal{V}$ and $Y'\in\mathcal{Y}$ as $\mathcal{V}$ is a cogenerator for $\mathcal{Y}.$
 Similarly, we also have an exact sequence
 $$0\ra \mathrm{Coker}\varphi\ra W'\ra X'\ra 0$$ in $\mathscr{A}$ with $W'\in \mathcal{W}$ and $X'\in \mathcal{X}.$
 Since $T(\mathcal{V})\subseteq\widehat{\mathcal{W}},$ we have $T(\mathcal{V})\subseteq \mathcal{X}^{\perp}.$
It follows from Lemma \ref{lem:extension group} that $$\Ex^{i}_{(T\downarrow\mathscr{A})}(\ccc{\mathrm{Coker}\varphi}{0}{},\ccc{T(V')}{V'}{})\cong
\Ex^{i}_{\mathscr{A}}(\mathrm{Coker}\varphi,T(V'))=0$$ for all $i\geq 1$.
So $\Hom_{(T\downarrow\mathscr{A})}(\ccc{X}{Y}{\varphi},\ccc{T(V')}{V'}{})\ra
\Hom_{(T\downarrow\mathscr{A})}(\ccc{T(Y)}{Y}{},\ccc{T(V')}{V'}{})$ is an epimorphism.
Hence we have the following exact commutative diagram in $(T\downarrow\mathscr{A})$
$$\xymatrix{&0\ar[d]&0\ar[d]&0\ar[d]&\\
 0\ar[r]&\ccc{T(Y)}{Y}{}\ar[r]^{}\ar[d]_{}&\ccc{X}{Y}{\varphi}\ar[r]^{}\ar[d]_{}&\ccc{\mathrm{Coker}\varphi}{0}{}\ar[r]\ar[d]_{}&0\\
 0\ar[r]&\ccc{T(V')}{V'}{}\ar[r]^{}\ar[d]&\ccc{T(V')}{V'}{}\oplus\ccc{W'}{0}{}\ar[r]^{}\ar[d]&\ccc{W'}{0}{}\ar[r]\ar[d]&0\\
 0\ar[r]&\ccc{T(Y')}{Y'}{}\ar[r]^{}\ar[d]&\ccc{C}{Y'}{\theta}\ar[r]\ar[d]&\ccc{X'}{0}{}\ar[r]\ar[d]&0\\
 &0&0&\;0.&}\eqno{\raisebox{-20.5ex}{(3.1)}}$$
We mention that $W'\in \W$. Then $\ccc{T(V')}{V'}{}\oplus \ccc{W'}{0}{}\in \mathfrak{B}^{\mathcal{W}}_{\mathcal{V}}.$
Since $\ccc{T(Y')}{Y'}{}$ and $\ccc{X'}{0}{}$ are in $\mathfrak{B}^{\mathcal{X}}_{\mathcal{Y}},$ one has $\ccc{C}{Y'}{\theta}\in \mathfrak{B}^{\mathcal{X}}_{\mathcal{Y}}$ as
$\mathfrak{B}^{\mathcal{X}}_{\mathcal{Y}}$ is closed under extensions.
Hence we already have shown that $\mathfrak{B}^{\mathcal{W}}_{\mathcal{V}}$ is an injective cogenerator for $\mathfrak{B}^{\mathcal{X}}_{\mathcal{Y}}$.
So $(\mathfrak{B}^{\mathcal{X}}_{\mathcal{Y}},\mathfrak{B}^{\mathcal{W}}_{\mathcal{V}})$ is a left Frobenius pair in $(T\downarrow\mathscr{A})$, as desired.

$(2)\Rightarrow (1).$ First we show that $\mathcal{X},\mathcal{Y},\mathcal{V}$ and $\mathcal{W}$ are closed under direct summands.
Let $Y_{1}\oplus Y_{2}\in \mathcal{Y}$.
Then $\ccc{T(Y_{1}\oplus Y_{2})}{Y_{1}\oplus Y_{2}}{}\in \mathfrak{B}^{\mathcal{X}}_{\mathcal{Y}}.$
Note that $\ccc{T(Y_{1}\oplus Y_{2})}{Y_{1}\oplus Y_{2}}{}\cong\ccc{T(Y_{1})}{Y_{1}}{}\oplus \ccc{T(Y_{2})}{Y_{2}}{}.$
Since $\mathfrak{B}^{\mathcal{X}}_{\mathcal{Y}}$ is closed under direct summands,
it follows that $\ccc{T(Y_{1})}{Y_{1}}{}$ and $\ccc{T(Y_{2})}{Y_{2}}{}$ are in $\mathfrak{B}^{\mathcal{X}}_{\mathcal{Y}}.$
Thus $Y_{1}$ and $Y_{2}$ are in $\mathcal{Y}.$
Hence $\mathcal{Y}$ is closed under direct summands.
Let $X_{1}\oplus X_{2}\in \mathcal{X}$.
Then $\ccc{X_{1}\oplus X_{2}}{0}{}=\ccc{X_{1}}{0}{}\oplus\ccc{X_{2}}{0}{}\in \mathfrak{B}^{\mathcal{X}}_{\mathcal{Y}}.$
So $\ccc{X_{1}}{0}{}$ and $\ccc{X_{2}}{0}{}$ are in $\mathfrak{B}^{\mathcal{X}}_{\mathcal{Y}}$ as $\mathfrak{B}^{\mathcal{X}}_{\mathcal{Y}}$ is closed under direct summands.
Thus $X_{1}$ and $X_{2}$ are in $\mathcal{X}.$
Hence $\mathcal{X}$ is closed under direct summands.
Similarly, we can show that $\mathcal{W}$ and $\mathcal{V}$ are closed under direct summands.

Let $0\ra X_{1}\ra X_{2}\ra X_{3}\ra 0$ be an exact sequence in $\mathscr{A}$ with
$X_{3}\in \mathcal{X}$.
Then we have an exact sequence $0\ra \ccc{X_{1}}{0}{}\ra\ccc{X_{2}}{0}{}\ra\ccc{X_{3}}{0}{}\ra 0$ with $\ccc{X_{3}}{0}{}\in \mathfrak{B}^{\mathcal{X}}_{\mathcal{Y}}.$
Note that $(\mathfrak{B}^{\mathcal{X}}_{\mathcal{Y}},\mathfrak{B}^{\mathcal{W}}_{\mathcal{V}})$ is a left Frobenius pair in $(T\downarrow\mathscr{A})$. It follows that $\ccc{X_{2}}{0}{}\in \mathfrak{B}^{\mathcal{X}}_{\mathcal{Y}}$ if and only if $\ccc{X_{1}}{0}{}\in \mathfrak{B}^{\mathcal{X}}_{\mathcal{Y}}$ if and only if $X_{2}\in \mathcal{X} $ if and only if $X_{1}\in \mathcal{X}.$
Thus $\mathcal{X}$ is closed under extensions and kernels of epimorphisms.
Let $X\in \mathcal{X}$ and $W\in\mathcal{W}$. Then $\ccc{X}{0}{}\in \mathfrak{B}^{\mathcal{X}}_{\mathcal{Y}}$ and $\ccc{W}{0}{}\in \mathfrak{B}^{\mathcal{W}}_{\mathcal{V}}$. It follows from Lemma \ref{lem:extension group} that $\Ex^{i}_{\mathscr{A}}(X,W)\cong\Ex^{i}_{(T\downarrow\mathscr{A})}(\ccc{X}{0}{},\ccc{W}{0}{})=0$
for all $i\geq 1$.
So $\mathcal{W}\subseteq \mathcal{X}^{\perp}$.
Since $\mathfrak{B}^{\mathcal{W}}_{\mathcal{V}}\subseteq\mathfrak{B}^{\mathcal{X}}_{\mathcal{Y}},$
it is easy to check that $\mathcal{W}\subseteq\mathcal{X}$ and $\mathcal{V}\subseteq\mathcal{Y}.$
For any $X\in \mathcal{X},$ we have an exact sequence
 $$0\ra \ccc{X}{0}{}\ra \ccc{W}{V}{\varphi}\ra \ccc{N}{V}{\psi}\ra 0$$ with $\ccc{W}{V}{\varphi}\in \mathfrak{B}^{\mathcal{W}}_{\mathcal{V}}$ and $\ccc{N}{V}{\psi}\in \mathfrak{B}^{\mathcal{X}}_{\mathcal{Y}}$ as $\mathfrak{B}^{\mathcal{W}}_{\mathcal{V}}$ is an injective cogenerator for $\mathfrak{B}^{\mathcal{X}}_{\mathcal{Y}}$. Thus we have
 the following exact commutative diagram in $\mathscr{A}$

 $$\xymatrix{&&0\ar[d]&0\ar[d]\\0\ar[r]&0\ar[r]\ar[d]&T(V)\ar@{=}[r]\ar[d]^\varphi&T(V)\ar[r]\ar[d]^\psi&0\\
    0\ar[r]&X\ar[r]&W\ar[d]\ar[r]&N\ar[d] \ar[r]&0\\
    &&\mathrm{Coker}\varphi\ar[r]\ar[d]&\mathrm{Coker}\psi\ar[d]\\&&0&\;0.\\}$$
Since $T(\mathcal{V})\subseteq \widehat{\mathcal{W}}$ and $\mathrm{Coker}\varphi\in \mathcal{W},$ the second column in the above diagram splits. It follows that $W\cong T(V)\oplus \mathrm{Coker}\varphi$.
 We mention that  $\mathrm{Coker}\psi$
 is in $\X$ as $\ccc{N}{V}{\psi}\in \mathfrak{B}^{\mathcal{X}}_{\mathcal{Y}}.$
Then $\Ex^{i}_{\mathscr{A}}(\mathrm{Coker}\psi,T(V))=0$ by $\W\subseteq\X^{\perp}.$
It implies that $N\cong T(V)\oplus \mathrm{Coker}\psi$.
Since $\X$ is closed under finite direct sums and $\W\subseteq\X$, it is easy to check that $N\in \widehat{\X}$.
So one deduces that $\ccc{N}{0}{}\in \widehat{\mathfrak{B}^{\mathcal{X}}_{\mathcal{Y}}}$.
Since $(\mathfrak{B}^{\mathcal{X}}_{\mathcal{Y}},\mathfrak{B}^{\mathcal{W}}_{\mathcal{V}})$ is a left Frobenius pair in $(T\downarrow\mathscr{A})$ by assumption,
it follows from Lemma \ref{main result of AB}(1) that there exists an exact sequence
$$0\ra \ccc{W_{1}}{V_{1}}{\theta_{1}}\ra \ccc{X_{1}}{V_{1}}{\varphi_{1}}\ra \ccc{N}{0}{}\ra 0$$
with $\ccc{W_{1}}{V_{1}}{\theta_{1}}\in \widehat{\mathfrak{B}^{\mathcal{W}}_{\mathcal{V}}}$ and $\ccc{X_{1}}{V_{1}}{\varphi_{1}}\in \mathfrak{B}^{\mathcal{X}}_{\mathcal{Y}}.$
Thus we have  the following exact commutative diagram in $\mathscr{A}$
$$\xymatrix{&T(V_{1})\ar@{=}[r]^{}\ar[d]_{\theta_{1}}&T(V_{1})\ar[r]^{}\ar[d]_{\varphi_{1}}&0\ar[r]\ar[d]&0\\
    0\ar[r]&W_{1}\ar[r]^{}&X_{1}\ar[r]^{}&N \ar[r]&0.}$$
By the Snake Lemma, there is an exact sequence
$0\ra \mathrm{Coker} \theta_{1}\ra \mathrm{Coker} \varphi_{1}\ra N\ra 0$ with $\mathrm{Coker} \varphi_{1}\in \X$.
Since $\ccc{W_{1}}{V_{1}}{\theta_{1}}\in \widehat{\mathfrak{B}^{\mathcal{W}}_{\mathcal{V}}}$ and
$(\mathrm{L}_{n}T)\mathcal{Y}=0$ for all $n\geq 1$, it is easy to check that
 $\mathrm{Coker} \theta_{1}\in \widehat{\W}$.
Consider the pullback diagram in $\mathscr{A}$
$$\xymatrix{&&0\ar[d]&0\ar[d]\\
&&\mathrm{Coker} \theta_{1}\ar@{=}[r]\ar[d]&\mathrm{Coker} \theta_{1}\ar[d]\\
0\ar[r]&X\ar@{=}[d]\ar[r]&Z\ar[d]\ar[r]&\mathrm{Coker} \varphi_{1}\ar[d]\ar[r]&0\\
0\ar[r]&X\ar[r]&W\ar[d]\ar[r]&N\ar[d]\ar[r]&0\\
&&0&\;0.\\
}$$
Since $\X$ is closed under extensions, one has $Z\in \X.$
We mention that $\ccc{T(V)}{0}{}$ and $\ccc{\mathrm{Coker} \varphi}{0}{}$ are in $\widehat{\mathfrak{B}^{\mathcal{W}}_{\mathcal{V}}}$. Then $\ccc{W}{0}{}\cong \ccc{T(V)}{0}{}\oplus \ccc{\mathrm{Coker} \varphi}{0}{}\in \widehat{\mathfrak{B}^{\mathcal{W}}_{\mathcal{V}}}$ by Lemma \ref{main result of AB}(3).
On the other hand, the second column in the above diagram implies that there is an exact sequence
$0\ra \ccc{\mathrm{Coker} \theta_{1}}{0}{}\ra \ccc{Z}{0}{}\ra \ccc{W}{0}{}\ra 0$ in $(T\downarrow\mathscr{A})$.
It follows from Lemma \ref{main result of AB}(4) that $\ccc{Z}{0}{}\in \mathfrak{B}^{\mathcal{X}}_{\mathcal{Y}}\cap \widehat{\mathfrak{B}^{\mathcal{W}}_{\mathcal{V}}}=\mathfrak{B}^{\mathcal{W}}_{\mathcal{V}}.$
Hence $Z\in \W.$
Thus the second row in the above diagram implies that $\mathcal{W}$ is a cogenerator for $\mathcal{X}.$ So $(\mathcal{X},\mathcal{W})$ is a left Frobenius pair in $\mathscr{A}$.

Let $0\ra Y_{1}\ra Y_{2}\ra Y_{3}\ra 0$ be an exact sequence in $\mathscr{B}$ with $Y_{3}\in \mathcal{Y}$.
Then we have an exact sequence $$0\ra \ccc{T(Y_{1})}{Y_{1}}{}\ra \ccc{T(Y_{2})}{Y_{2}}{}\ra \ccc{T(Y_{3})}{Y_{3}}{}\ra 0$$ with $\ccc{T(Y_{3})}{Y_{3}}{}\in \mathfrak{B}^{\mathcal{X}}_{\mathcal{Y}}$ as $(\mathrm{L}_{1}T)Y_3=0$ by hypothesis.
Hence $Y_{1}\in \mathcal{Y}$ if and only if $\ccc{T(Y_{1})}{Y_{1}}{}\in \mathfrak{B}^{\mathcal{X}}_{\mathcal{Y}}$
if and only if $\ccc{T(Y_{2})}{Y_{2}}{}\in \mathfrak{B}^{\mathcal{X}}_{\mathcal{Y}}$
if and only if $Y_{2}\in \mathcal{Y}$.
Thus $\mathcal{Y}$ is closed under extensions and kernels of epimorphisms.
For any $Y\in \mathcal{Y}$, we have an exact sequence
$$0\ra \ccc{T(Y)}{Y}{}\ra \ccc{W'}{V'}{\theta'}\ra \ccc{X'}{Y'}{\varphi'}\ra 0$$ with $\ccc{W'}{V'}{\theta'}\in \mathfrak{B}^{\mathcal{W}}_{\mathcal{V}}$ and $\ccc{X'}{Y'}{\varphi'}\in \mathfrak{B}^{\mathcal{X}}_{\mathcal{Y}}$ as $\mathfrak{B}^{\mathcal{W}}_{\mathcal{V}}$ is an injective cogenerator for $\mathfrak{B}^{\mathcal{X}}_{\mathcal{Y}}$. Thus $V'\in \mathcal{V}$ and $Y'\in \mathcal{Y}$.
The exact sequence $0\ra Y\ra V'\ra Y'\ra 0$ in $\mathscr{B}$ implies that $\mathcal{V}$ is a cogenerator for $\mathcal{Y}.$
Let $L\in \mathcal{Y}$ and $J\in\mathcal{V}$.
Then $\ccc{T(L)}{L}{}\in \mathfrak{B}^{\mathcal{X}}_{\mathcal{Y}}$ and
$\ccc{T(J)}{J}{}\in\mathfrak{B}^{\mathcal{W}}_{\mathcal{V}}$. Thus
$\Ex^{i}_{\mathscr{B}}(L,J)\cong\Ex^{i}_{(T\downarrow\mathscr{A})}(\ccc{T(L)}{L}{},\ccc{T(J)}{J}{})
=0$ for all $i\geq 1$ by Lemma \ref{lem:extension group}, and hence $\mathcal{V}\subseteq\mathcal{Y}^{\perp}$.
So $(\mathcal{Y},\mathcal{V})$ is a left Frobenius pair.
  This completes the proof.
\end{proof}

We end this section with the following result, which contains Theorem \ref{thm:1.1}(2) in the introduction.
\begin{prop}\label{thm1} Let $(T\downarrow\mathscr{A})$ be a comma category. Assume that $T(\mathcal{V})\subseteq \widehat{\mathcal{W}}$, $T(\mathcal{Y})\subseteq\widehat{\mathcal{X}}$ and $(\mathrm{L}_{n}T)\mathcal{Y}=0$ for all $n\geq 1$. The following conditions are equivalent:
\begin{enumerate}
 \item [\rm {(1)}] $(\mathcal{X},\mathcal{W})$ and $(\mathcal{Y},\mathcal{V})$ are strong left Frobenius pairs in $\mathscr{A}$ and $\mathscr{B}$, respectively.

 \item [\rm {(2)}] $(\mathfrak{B}^{\mathcal{X}}_{\mathcal{Y}},\mathfrak{B}^{\mathcal{W}}_{\mathcal{V}})$ is a strong left Frobenius pair in $(T\downarrow\mathscr{A})$.
 \end{enumerate}
\end{prop}
\begin{proof}
$(1)\Rightarrow (2)$. By Proposition \ref{left Frobenius pair},
we only need to show that $\mathfrak{B}^{\mathcal{W}}_{\mathcal{V}}$ is a projective generator for $\mathfrak{B}^{\mathcal{X}}_{\mathcal{Y}}$.

Let $\ccc{X}{Y}{\varphi}\in \mathfrak{B}^{\mathcal{X}}_{\mathcal{Y}}$ and
$\ccc{W}{V}{\psi}\in \mathfrak{B}^{\mathcal{W}}_{\mathcal{V}}$. Then we have an exact sequence
$$0\ra\ccc{T(V)}{V}{}\ra\ccc{W}{V}{\psi}\ra\ccc{\mathrm{Coker}\psi}{0}{}\ra 0,$$
which induces the following exact sequence
$$\Ex^{i}_{(T\downarrow\mathscr{A})}(\ccc{\mathrm{Coker}\psi}{0}{},\ccc{X}{Y}{\varphi})\ra
\Ex^{i}_{(T\downarrow\mathscr{A})}(\ccc{W}{V}{\psi},\ccc{X}{Y}{\varphi})\ra
\Ex^{i}_{(T\downarrow\mathscr{A})}(\ccc{T(V)}{V}{},\ccc{X}{Y}{\varphi})$$ for all $i\geq 1.$
 Note that $\Ex^{i}_{(T\downarrow\mathscr{A})}(\ccc{T(V)}{V}{},\ccc{X}{Y}{\varphi})\cong\Ex^{i}_{\mathscr{B}}(V,Y)=0$
 for all $i\geq 1$ by Lemma \ref{lem:extension group}.
We need to show that $$\Ex^{i}_{(T\downarrow\mathscr{A})}(\ccc{\mathrm{Coker}\varphi}{0}{},\ccc{X}{Y}{\varphi})=0$$
for all $i\geq 1.$
Since $\ccc{X}{Y}{\varphi}\in\mathfrak{B}^{\mathcal{X}}_{\mathcal{Y}}$, we have an exact sequence
$$0\ra\ccc{T(Y)}{Y}{}\ra\ccc{X}{Y}{\varphi}\ra\ccc{\mathrm{Coker}\varphi}{0}{}\ra 0,$$
which induces the following exact sequence
$$\Ex^{i}_{(T\downarrow\mathscr{A})}(\ccc{\mathrm{Coker}\psi}{0}{},\ccc{T(Y)}{Y}{})\ra
\Ex^{i}_{(T\downarrow\mathscr{A})}(\ccc{\mathrm{Coker}\psi}{0}{},\ccc{X}{Y}{\varphi})\ra
\Ex^{i}_{(T\downarrow\mathscr{A})}(\ccc{\mathrm{Coker}\psi}{0}{},\ccc{\mathrm{Coker}\varphi}{0}{})$$
for all $i\geq 1$.
Since $\mathrm{Coker}\psi\in\mathcal{W},\mathrm{Coker}\varphi\in\mathcal{X}$, $T(\mathcal{Y})\subseteq\widehat{\mathcal{X}}$ and $\mathcal{W}$ is a projective generator for $\mathcal{X}$, it follows that $$\Ex^{i}_{(T\downarrow\mathscr{A})}(\ccc{\mathrm{Coker}\psi}{0}{},\ccc{T(Y)}{Y}{})\cong
\Ex^{i}_{\mathscr{A}}(\mathrm{Coker}\psi,T(Y))=0$$ and
$$\Ex^{i}_{(T\downarrow\mathscr{A})}(\ccc{\mathrm{Coker}\psi}{0}{},\ccc{\mathrm{Coker}\varphi}{0}{})\cong
\Ex^{i}_{\mathscr{A}}(\mathrm{Coker}\psi,\mathrm{Coker}\varphi)=0$$ for all $i\geq 1$ by Lemma \ref{lem:extension group}.
Thus $\Ex^{i}_{(T\downarrow\mathscr{A})}(\ccc{\mathrm{Coker}\psi}{0}{},\ccc{X}{Y}{\varphi})=0$ for all $i\geq 1$, and hence $\Ex^{i}_{(T\downarrow\mathscr{A})}(\ccc{W}{V}{\psi},\ccc{X}{Y}{\varphi})=0$ for all $i\geq 1$.
So $\mathfrak{B}^{\mathcal{W}}_{\mathcal{V}}\subseteq{^{\perp}(\mathfrak{B}^{\mathcal{X}}_{\mathcal{Y}})}$.

Since
 $Y\in \mathcal{Y}$ and $\mathrm{Coker} \varphi\in \mathcal{X}$, we have an exact sequence $0\ra Y'\ra V'\ra Y\ra 0$ in $\mathscr{B}$ with $V'\in \mathcal{V}$ and $Y'\in\mathcal{Y}$ as $(\mathcal{Y},\mathcal{V})$ is a strong left Frobenius pair. Similarly, we have an exact sequence  $0\ra X'\ra W'\ra \mathrm{Coker}\varphi\ra 0$ in $\mathscr{A}$ with $W'\in \mathcal{W}$ and $X'\in \mathcal{X}.$
Note that $T(\mathcal{Y})\subseteq\widehat{\mathcal{X}}$. Then $\Ex^{i}_{(T\downarrow\mathscr{A})}(\ccc{W'}{0}{},\ccc{T(Y)}{Y}{})\cong
\Ex^{i}_{\mathscr{A}}(W',T(Y))=0$ for all $i\geq 1$ by Lemma \ref{lem:extension group}. Hence we have the following exact commutative diagram in $(T\downarrow\mathscr{A})$
$$\xymatrix{&0\ar[d]&0\ar[d]&0\ar[d]&\\
 0\ar[r]&\ccc{T(Y')}{Y'}{}\ar[r]^{}\ar[d]_{}&\ccc{K}{Y'}{\rho}\ar[r]^{}\ar[d]_{}&\ccc{X'}{0}{}\ar[r]\ar[d]_{}&0\\
 0\ar[r]&\ccc{T(V')}{V'}{}\ar[r]^{}\ar[d]&\ccc{T(V')}{V'}{}\oplus\ccc{W'}{0}{}\ar[r]^{}\ar[d]&\ccc{W'}{0}{}\ar[r]\ar[d]&0\\
 0\ar[r]&\ccc{T(Y)}{Y}{}\ar[r]^{}\ar[d]&\ccc{X}{Y}{\varphi}\ar[r]\ar[d]&\ccc{\mathrm{Coker}\varphi}{0}{}\ar[r]\ar[d]&0\\
 &0&0&\;0.&}$$
Since $W'\in\W$, it is easy to check that $\ccc{T(V')}{V'}{}\oplus \ccc{W'}{0}{}\in \mathfrak{B}^{\mathcal{W}}_{\mathcal{V}}$.
Note that  $\mathfrak{B}^{\mathcal{X}}_{\mathcal{Y}}$ is closed under extensions. Hence the second column in the above diagram implies that $\mathfrak{B}^{\mathcal{W}}_{\mathcal{V}}$ is a projective generator for $\mathfrak{B}^{\mathcal{X}}_{\mathcal{Y}}$.
So $(\mathfrak{B}^{\mathcal{X}}_{\mathcal{Y}},\mathfrak{B}^{\mathcal{W}}_{\mathcal{V}})$ is a strong left Frobenius pair in $(T\downarrow\mathscr{A})$, as desired.

$(2)\Rightarrow (1).$ By Proposition \ref{left Frobenius pair}, it suffices to check that $\mathcal{W}$ is a projective generator for $\mathcal{X}$ and $\mathcal{V}$ is a projective generator for $\mathcal{Y}.$
Let $X\in \mathcal{X}$.
Then there exists an exact sequence $$0\ra \ccc{N}{V}{\theta}\ra \ccc{W}{V}{\varphi}\ra \ccc{X}{0}{}\ra 0$$
with $\ccc{W}{V}{\varphi}\in \mathfrak{B}^{\mathcal{W}}_{\mathcal{V}}$ and $\ccc{N}{V}{\theta}\in \mathfrak{B}^{\mathcal{X}}_{\mathcal{Y}}$
as $\mathfrak{B}^{\mathcal{W}}_{\mathcal{V}}$ is a generator for $\mathfrak{B}^{\mathcal{X}}_{\mathcal{Y}}.$
Hence we have the following commutative diagram in $\mathscr{A}$
$$\xymatrix{&0\ar[d]&0\ar[d]\\&T(V)\ar@{=}[r]\ar[d]&T(V)\ar[r]\ar[d]&0\ar[d]\\
    0\ar[r]&N\ar[r]\ar[d]&W\ar[d]\ar[r]&X \ar[r]&0\\
    {}&\mathrm{Coker}\theta\ar[r]\ar[d]&\mathrm{Coker}\varphi\ar[d]&&\\&0&\;0.\\}$$
Since $T(V)\in \widehat{\mathcal{W}}$ and $\mathrm{Coker}\varphi\in \mathcal{W},$ the second column in the above diagram splits.
Thus $W\cong T(V)\oplus \mathrm{Coker}\varphi$, and hence $W\in \widehat{\mathcal{W}}$ by
Proposition \ref{left Frobenius pair} and Lemma \ref{main result of AB}(3).
So there is an exact sequence $0\ra Y\ra W'\ra W\ra 0$ in $\mathscr{A}$ with $W'\in \mathcal{W}$ and $Y\in \widehat{\mathcal{W}}.$
 Consider the pullback diagram in $\mathscr{A}$
 $$\xymatrix{&&0\ar[d]&0\ar[d]\\
0\ar[r]&Y\ar@{=}[d]\ar[r]&Z\ar[d]\ar[r]&N\ar[d]\ar[r]&0\\
0\ar[r]&Y\ar[r]&W'\ar[d]\ar[r]&W\ar[d]\ar[r]&0\\
&&X\ar[d]\ar@{=}[r]&X\ar[d]&\\
&&0&\;0.\\
}$$
Since $(\X,\W)$ is a left Frobenius pair by Proposition \ref{left Frobenius pair}, it follows that $Z\in \X.$

The exact sequence $0\ra Z\ra W'\ra X\ra 0$ in $\mathscr{A}$ implies that $\mathcal{W}$ is a generator for $\mathcal{X}.$
Note that $\Ex^{i}_{\mathscr{A}}(Q,M)\cong\Ex^{i}_{(T\downarrow\mathscr{A})}(\ccc{Q}{0}{},\ccc{M}{0}{})=0$
for any $Q\in \mathcal{W}, M\in \mathcal{X}$ and all $i\geq 1$ by Lemma \ref{lem:extension group}.
It follows that $\mathcal{W}\subseteq {^{\perp}\mathcal{X}}$.
Thus $\mathcal{W}$ is a projective generator for $\mathcal{X},$
and hence $(\mathcal{X},\mathcal{W})$ is a strong left Frobenius pair.
Similarly, we can show that $(\mathcal{Y},\mathcal{V})$ is a strong left Frobenius pair. This completes the proof.
\end{proof}

\section{Recollements of stable categories of Frobenius pairs}\label{section4}

In this section, we first employ Theorem \ref{thm:1.1} to prove Theorem \ref{thm:1.2} in the introduction, and then apply Theorem \ref{thm:1.2} to complete hereditary cotorsion pairs and Gorenstein projective objects.

\subsection{Proof of Theorem \ref{thm:1.2}}

The strategy of proving Theorem \ref{thm:1.2} is as follows: First, we apply Theorem \ref{thm:1.1}(1) to show that given left Frobenius pairs $(\mathcal{X},\mathcal{W})$ and $(\mathcal{Y},\mathcal{V})$ in $\mathscr{A}$ and $\mathscr{B}$ respectively, there exists a left Frobenius pair $(\mathfrak{B}^{\mathcal{X}}_{\mathcal{Y}},\mathfrak{B}^{\mathcal{W}}_{\mathcal{V}})$
in the comma category $(T\downarrow\mathscr{A})$. Next, we show that the functors $q,Z_{\mathscr{A}},U_{\mathscr{A}},T_{\mathscr{B}},U_{\mathscr{B}}$ in the following recollemet (4.1) which induces by $(T\downarrow \mathscr{A})$ (see Remark \ref{remark:recollement})

$$\scalebox{0.85}{\xymatrixcolsep{3pc}\xymatrix{
  \mathscr{A}\ar[rr]^{Z_{\mathscr{A}}} && (T\downarrow \mathscr{A})\ar[rr]^{U_{\mathscr{B}}} \ar@/^1.8pc/[ll]^{U_{\mathscr{A}}}\ar@/_1.8pc/[ll]_{q} && \mathscr{B} \ar@/^1.8pc/[ll]^{Z_{\mathscr{B}}}\ar@/_1.8pc/[ll]_{T_{\mathscr{B}}}}}\eqno{(4.1)}$$
can be restricted to the stable categories with respect to those left Frobenius pairs above. However, the functor $Z_{\mathscr{B}}$ in the recollement (4.1) cannot be restricted to the stable categories.
Thus the main technical problem we encounter is that how
to construct an appropriate right triangle functor (denoted by $\overline{s}$) from $\overline{\mathcal{Y}}$ to  $\overline{\mathfrak{B}^{\mathcal{X}}_{\mathcal{Y}}}$ satisfying the requirements. To circumvent this problem, we try to construct a functor $s:\mathcal{Y}\ra \overline{\mathfrak{B}^{\mathcal{X}}_{\mathcal{Y}}}$, and then demonstrate that the functor $s$ can induce a fully faithful functor $\overline{s}:\overline{\mathcal{Y}}\ra \overline{\mathfrak{B}^{\mathcal{X}}_{\mathcal{Y}}}$.

Finally, we show that those right triangle functors $\overline{q}$, $\overline{Z_{\mathscr{A}}}$, $\overline{U_{\mathscr{A}}}$, $\overline{T_{\mathscr{B}}}$, $\overline{U_{\mathscr{B}}}$ and $\overline{s}$ above are nice enough to establish the recollement (4.1) of right triangulated categories in Theorem \ref{thm:1.2}. If in addition $(\mathcal{X},\mathcal{W})$ and $(\mathcal{Y},\mathcal{V})$ are strong left Frobenius pairs, then we show that it is in fact a recollement of triangulated categories by Theorem \ref{thm:1.1}(2).

{\bf Proof of Theorem \ref{thm:1.2}}. Assume that $(\mathcal{X},\mathcal{W})$ and $(\mathcal{Y},\mathcal{V})$ are left Frobenius pairs in $\mathscr{A}$ and $\mathscr{B}$, respectively. It follows from Proposition \ref{left Frobenius pair} that
$(\mathfrak{B}^{\mathcal{X}}_{\mathcal{Y}},\mathfrak{B}^{\mathcal{W}}_{\mathcal{V}})$ is a left Frobenius pair in $(T\downarrow\mathscr{A})$.
So $\overline{\mathcal{X}},\overline{\mathcal{Y}}$ and $\overline{\mathfrak{B}^{\mathcal{X}}_{\mathcal{Y}}}$ are right triangulated categories by Lemma \ref{lem:3.4}.

We first construct the functors involved.
If $\ccc{X}{Y}{\varphi}\rightarrow \ccc{X'}{Y'}{\varphi'}$ factors through an object $\ccc{W}{V}{\theta}\in \mathfrak{B}^{\mathcal{W}}_{\mathcal{V}}$, then $\mathrm{Coker}\varphi\rightarrow \mathrm{Coker}\varphi'$ factors through $\mathrm{Coker}\theta.$ It follows from Remark \ref{remark:recollement}(4) that the functor $q$ induces a functor $\overline{q}:\overline{\mathfrak{B}^{\mathcal{X}}_{\mathcal{Y}}}\rightarrow\overline{\mathcal{X}} $ given by $\ccc{X}{Y}{\varphi}\mapsto\mathrm{Coker}\varphi$.

 Similarly, we also have functors $\overline{Z_{\mathscr{A}}}:\overline{\mathcal{X}}\rightarrow \overline{\mathfrak{B}^{\mathcal{X}}_{\mathcal{Y}}}$ given by $X\mapsto\ccc{X}{0}{}$ and $\overline{T_{\mathscr{B}}}:\overline{\mathcal{Y}}\rightarrow \overline{\mathfrak{B}^{\mathcal{X}}_{\mathcal{Y}}}$ given by $Y\mapsto\ccc{T(Y)}{Y}{}$.
 It is easy to check that $\overline{Z_{\mathscr{A}}}$ and $\overline{T_{\mathscr{B}}}$ are fully faithful.

For any $\ccc{X}{Y}{\varphi}\in \mathfrak{B}^{\mathcal{X}}_{\mathcal{Y}},$
 we have an exact sequence $0\ra T(Y)\ra X\ra \mathrm{Coker}\varphi\ra 0$ with $Y\in\mathcal{Y}$ and $\mathrm{Coker}\varphi\in\X.$
Since $T(\mathcal{Y})\subseteq \mathcal{X}$ and $\mathcal{X}$ is closed under extensions, it follows that $X\in\X.$
Thus we have a functor
 $\overline{U_{\mathscr{A}}}:\overline{\mathfrak{B}^{\mathcal{X}}_{\mathcal{Y}}}\rightarrow
\overline{\mathcal{X}}$ given by $\ccc{X}{Y}{\varphi}\mapsto X$.
Similarly, we have a functor $\overline{U_{\mathscr{B}}}:\overline{\mathfrak{B}^{\mathcal{X}}_{\mathcal{Y}}}\rightarrow
\overline{\mathcal{Y}}$ given by $\ccc{X}{Y}{\varphi}\mapsto Y$.

Since $(q,Z_{\mathscr{A}}), (Z_{\mathscr{A}},U_{\mathscr{A}})$ and $(T_{\mathscr{B}},U_{\mathscr{B}})$ are adjoint pairs by Remark \ref{remark:recollement}, we have that $(\overline{q},\overline{Z_{\mathscr{A}}})$,  $(\overline{Z_{\mathscr{A}}},\overline{U_{\mathscr{A}}})$ and $(\overline{T_{\mathscr{B}}},\overline{U_{\mathscr{B}}})$ are adjoint pairs by \cite[Lemma 2.4]{WL} or \cite[Lemma 3.1]{XZZ}.

{\bf Step 1:} We claim that $\overline{q}$, $\overline{Z_{\mathscr{A}}}$, $\overline{U_{\mathscr{A}}}$, $\overline{T_{\mathscr{B}}}$ and $\overline{U_{\mathscr{B}}}$ are right triangle functors.

We first start to show that $\overline{q}$ is a right triangle functor.
Let $$0\ra\ccc{X'}{Y'}{\varphi'}\ra\ccc{X}{Y}{\varphi}\ra\ccc{X''}{Y''}{\varphi''}\ra0$$ be an exact sequence in $\mathfrak{B}^{\mathcal{X}}_{\mathcal{Y}}$.
Then we have an exact sequence $$0\ra \mathrm{Coker}\varphi^{'}\ra\mathrm{Coker}\varphi\ra\mathrm{Coker}\varphi^{''}\ra 0$$ in $\mathcal{X}$ by the Snake Lemma.
Hence it is enough to show that $\overline{q}\Sigma\ra\Sigma\overline{q}$ is a natural isomorphism by Remark \ref{remark:right triangulated categories}.
 Let $\ccc{f}{g}{}:\ccc{X_{1}}{Y_{1}}{\varphi_{1}}\ra \ccc{X_{2}}{Y_{2}}{\varphi_{2}}$ be a morphism in $\mathfrak{B}^{\mathcal{X}}_{\mathcal{Y}}$.
Since
$\mathcal{V}$ is an injective cogenarator for $\mathcal{Y}$,
one has exact sequences $$0\ra Y_{1}\xrightarrow{\alpha_{1}}V_{1}\xrightarrow{\beta_{1}}Y_{1}'\rightarrow 0$$
and
$$0\ra Y_{2}\xrightarrow{\alpha_{2}}V_{2}\xrightarrow{\beta_{2}}Y_{2}'\rightarrow 0$$
in $\mathscr{B}$ with $V_{i}\in \mathcal{V}$ and $Y_{i}'\in \mathcal{Y}$ for $i=1,2.$
Thus we have the following commutative diagram
$$\xymatrix{0\ar[r]&Y_{1}\ar[r]^{\alpha_{1}}\ar[d]_{g}&V_{1}\ar[r]^{\beta_{1}}\ar[d]_{t}&Y_{1}'\ar[r]\ar[d]_{s}&0\\
    0\ar[r]&Y_{2}\ar[r]^{\alpha_{2}}&V_{2}\ar[r]^{\beta_{2}}&Y_{2}' \ar[r]&0}$$
as $\Ex_{\mathscr{B}}^{1}(Y_{1}',V_{2})=0.$
Since $\mathcal{W}$ is an injective cogenerator for $\mathcal{X}$, we also have the following commutative diagram
$$\xymatrix{0\ar[r]&\mathrm{Coker}\varphi_{1}\ar[r]^{i_{1}}\ar[d]_{h}&W_{1}\ar[r]^{}\ar[d]_{k}&X_{1}'\ar[r]\ar[d]_{}&0\\
    0\ar[r]&\mathrm{Coker}\varphi_{2}\ar[r]^{i_{2}}&W_{2}\ar[r]^{}&X_{2}'\ar[r]&0}$$
 with $W_{i}\in \mathcal{W}$ and $X_{i}'\in \mathcal{X}$  for $i=1,2$ by the similar argument above.
Through the construction of diagram (3.1) in the proof of Proposition \ref{left Frobenius pair}, we have the following commutative diagram:
$$
\xymatrix@C=0.01em{
 & &
\ccc{T(Y_{1})}{Y_{1}}{} \ar[rrr]^{}
    \ar[dl]_{}
    \ar'[d]'[dd][ddd]
 & & &
\ccc{X_{1}}{Y_{1}}{\varphi_{1}} \ar[dl]_{}
    \ar[rrr]^{}
    \ar'[d]'[dd][ddd]
 & & &
\ccc{\mathrm{Coker}\varphi_{1}}{0}{} \ar[dl]^{}
    \ar[ddd]^{} \\
 &
\ccc{T(V_{1})}{V_{1}}{} \ar[rrr]_{}
    \ar[ld]_{}\ar'[d][ddd]
 & & &
\ccc{T(V_{1})}{V_{1}}{}\oplus\ccc{W_{1}}{0}{} \ar[rrr]^{}
    \ar[ld]_{}
    \ar'[d][ddd]
 & & &
\ccc{W_{1}}{0}{} \ar[ddd]^{}
    \ar[dl]^{} & \\
\ccc{T(Y_{1}')}{Y_{1}'}{} \ar[rrr]^{} \ar[ddd]_{}
 & & &
\ccc{C_{1}}{Y_{1}'}{\theta_{1}} \ar[rrr]^{}
    \ar[ddd]^{}
 & & &
\ccc{X_{1}'}{0}{} \ar[ddd]^{} & \\
 & &
\ccc{T(Y_{2})}{Y_{2}}{} \ar[dl]_{}
     \ar'[r]'[rr][rrr]
 & & &
\ccc{X_{2}}{Y_{2}}{\varphi_{2}} \ar[dl]^{}
    \ar'[r]'[rr][rrr]
 & & &
\ccc{\mathrm{Coker}\varphi_{2}}{0}{} \ar[dl]^{} \\
 &
\ccc{T(V_{2})}{V_{2}}{} \ar[ld]_{}
    \ar'[rr][rrr]
    & & &
\ccc{T(V_{2})}{V_{2}}{}\oplus\ccc{W_{2}}{0}{} \ar'[rr][rrr]\ar[dl]
 & & &
\ccc{W_{2}}{0}{} \ar[dl]^{}
& \\
\ccc{T(Y_{2}')}{Y_{2}'}{} \ar[rrr]_{}
 & & &
\ccc{C_{2}}{Y_{2}'}{\theta_{2}}
    \ar[rrr]_{}
 & & &
\ccc{X_{2}'}{0}{} &
}$$
By the above diagram, one has $\overline{q}\Sigma\ccc{X_{i}}{Y_{i}}{\varphi_{i}}=\overline{q}\ccc{C_{i}}{Y_{i}'}{\theta_{i}}=\mathrm{Coker}\theta_{i}\cong X_{i}'=\Sigma\overline{q}\ccc{X_{i}}{Y_{i}}{\varphi_{i}}$ for $i=1,2.$
The commutativity of diagram
$$
\xymatrix{\ccc{C_{1}}{Y_{1}'}{\theta_{1}}\ar[r]\ar[d]&\ccc{X_{1}'}{0}{}\ar[d]\\
\ccc{C_{2}}{Y_{2}'}{\theta_{2}}\ar[r]&\ccc{X_{2}'}{0}{}
}$$impies the commutativity of the following diagram
$$
\xymatrix{\overline{q}\ccc{C_{1}}{Y_{1}'}{\theta_{1}}\ar[r]\ar[d]&\overline{q}\ccc{X_{1}'}{0}{}\ar[d]\\
\overline{q}\ccc{C_{2}}{Y_{2}'}{\theta_{2}}\ar[r]&\overline{q}\ccc{X_{2}'}{0}{}
.}$$
It follows that $\overline{q}\Sigma\ra\Sigma\overline{q}$ is a natural isomorphism.
Thus $\overline{q}$ is a right triangle functor.

Similarly, we can show that
$\overline{Z_{\mathscr{A}}}$, $\overline{U_{\mathscr{A}}}$, $\overline{T_{\mathscr{B}}}$ and $\overline{U_{\mathscr{B}}}$ are right triangle functors.

{\bf Step 2:} We claim that $\mathrm{Im}\overline{Z_{\mathscr{A}}}= \mathrm{Ker}\overline{U_{\mathscr{B}}}.$
By construction, we have that $\mathrm{Im}\overline{Z_{\mathscr{A}}}\subseteq \mathrm{Ker}\overline{U_{\mathscr{B}}}$ and $\mathrm{Ker}\overline{U_{\mathscr{B}}}=\{\ccc{X}{Y}{\varphi}\in \mathfrak{B}^{\mathcal{X}}_{\mathcal{Y}}\mid Y\in \mathcal{V}\}$
in $\mathfrak{B}^{\mathcal{X}}_{\mathcal{Y}}.$
Let $\ccc{X}{V}{\varphi}\in \mathrm{Ker}\overline{U_{\mathscr{B}}}.$
Then we have an exact sequence $$0\ra T(V)\ra X\ra \mathrm{Coker}\varphi\ra 0$$
in $\mathscr{A}$. Since $\mathrm{Coker}\varphi\in \mathcal{X}$ and $T(V)\in {\mathcal{W}},$ the above exact sequence splits.
Thus $X\cong T(V)\oplus\mathrm{Coker}\varphi,$ and hence we have
$$\ccc{X}{V}{\varphi}\cong \ccc{T(V)}{V}{}\oplus\ccc{\mathrm{Coker}\varphi}{0}{}=\ccc{\mathrm{Coker}\varphi}{0}{}=\overline{Z_{\mathscr{A}}}(\mathrm{Coker}\varphi)$$
in $\overline{\mathfrak{B}^{\mathcal{X}}_{\mathcal{Y}}}$. So $\mathrm{Im}\overline{Z_{\mathscr{A}}}= \mathrm{Ker}\overline{U_{\mathscr{B}}},$ as desired.

{\bf Step 3:}
We claim that there exists a fully faithful functor $\overline{s}:\overline{\mathcal{Y}}\ra \overline{\mathfrak{B}^{\mathcal{X}}_{\mathcal{Y}}}$ with $\overline{s}(Y)=\ccc{W}{Y}{\sigma}$ for any $Y\in{\mathcal{Y}}$ such that $(\overline{U_{\mathscr{B}}},\overline{s})$ is an adjoint pair between $\overline{\mathcal{Y}}$ and $\overline{\mathfrak{B}^{\mathcal{X}}_{\mathcal{Y}}}$, where $W$ is an object in $\mathcal{W}$ satisfying that there is an exact sequence $0\ra T(Y)\xrightarrow{\sigma} W\xrightarrow{\pi} \mathrm{Coker}\sigma\ra 0$ in $\mathscr{A}$ with $\mathrm{Coker}\sigma\in \mathcal{X}$.

We justify the claim as follows.
Define a functor $s:\mathcal{Y}\ra \overline{\mathfrak{B}^{\mathcal{X}}_{\mathcal{Y}}}$ as follows.
Let $Y\in \mathcal{Y}$. Since $T(\mathcal{Y})\subseteq\mathcal{X}$ and $(\mathcal{X},\mathcal{W})$ is a left Frobenius pair, one has an exact sequence
$$0\ra T(Y)\xrightarrow{\sigma} W\xrightarrow{\pi} \mathrm{Coker}\sigma\ra 0$$ in $\mathscr{A}$ with $W\in \mathcal{W}$ and $\mathrm{Coker}\sigma\in \mathcal{X}.$ We define $s(Y)=\ccc{W}{Y}{\sigma}$. One can see that it is well-defined from the argument below, by taking $g=\mathrm{Id}_{Y}$.
Let $g:Y\ra Y'$ be a morphism in $\mathcal{Y}$ such that $$0\ra T(Y')\xrightarrow{\sigma'} W'\xrightarrow{\pi'} \mathrm{Coker}\sigma'\ra 0$$
is an exact sequence in $\mathscr{A}$ with $W'\in \mathcal{W}$ and $\mathrm{Coker}\sigma'\in \mathcal{X}.$
Since $\Ex^{1}_{\mathscr{A}}(\mathrm{Coker}\sigma,W')=0$, we have the following commutative diagram
$$\xymatrix{0\ar[r]&T(Y)\ar[r]^{\sigma}\ar[d]_{T(g)}&W\ar[r]^{\pi}\ar[d]_{f}&\mathrm{Coker}\sigma\ar[r]\ar[d]&0\\
    0\ar[r]&T(Y')\ar[r]^{\sigma'}&W'\ar[r]^{\pi'}&\mathrm{Coker}\sigma' \ar[r]&0.}$$
  Define $s(g)=\overline{\ccc{f}{g}{}}:\ccc{W}{Y}{\sigma}\ra \ccc{W'}{Y'}{\sigma'}$. We claim that $s(g)$ is well-defined and hence the functor $s:\mathcal{Y}\ra \overline{\mathfrak{B}^{\mathcal{X}}_{\mathcal{Y}}}$ is well-defined.
Indeed, if we have another map $f':W\ra W'$ with
$f'\sigma=\sigma'T(g)$, then $f-f'$ factors through $\mathrm{Coker} \sigma$.
Since $\mathrm{Coker}\sigma\in \mathcal{X}$, we have a monomorphism $\widetilde{\sigma}:\mathrm{Coker}\sigma\ra \widetilde{W}$ with $\widetilde{W}\in \mathcal{W}.$
Then it is easy to see that $\ccc{f}{g}{}-\ccc{f'}{g}{}$ factors through $\ccc{\widetilde{W}}{0}{}.$
Hence $\ccc{f}{g}{}= \ccc{f'}{g}{}$ in $\overline{\mathfrak{B}^{\mathcal{X}}_{\mathcal{Y}}}.$

Next, we show that the functor $s:\mathcal{Y}\ra \overline{\mathfrak{B}^{\mathcal{X}}_{\mathcal{Y}}}$ induces a  fully faithful functor $\overline{s}:\overline{\mathcal{Y}}\ra \overline{\mathfrak{B}^{\mathcal{X}}_{\mathcal{Y}}}.$
For this, assume that $g:Y\ra Y'$ factors through an object $V\in \mathcal{V}$ with $g=g_{2}g_{1}.$
Since $T(V)\in \mathcal{W},$ we have $\Ex^{1}_{\mathscr{A}}(\mathrm{Coker}\sigma,T(V))=0$.
So there is a morphism $\alpha:W\ra T(V)$ with $T(g_{1})=\alpha\sigma.$

$$\xymatrix{0\ar[r]&T(Y)\ar[r]^{\sigma}\ar[d]_{T(g_{1})}&W\ar[r]^{\pi}\ar[dd]_{f}\ar@/^/@{.>}[dl]_{\alpha}&\mathrm{Coker}\sigma\ar[r]\ar@{^{(}->}[d]^{\widetilde{\sigma}}\ar@/^/@{.>}[ddl]_{\widetilde{f}}&0\\
&T(V)\ar[d]_{T(g_{2})}&&\widetilde{W}\ar@/^0.8pc/@{.>}[dl]_{\beta}&\\
    0\ar[r]&T(Y')\ar[r]^{\sigma'}&W'\ar[r]^{\pi'}&\mathrm{Coker}\sigma' \ar[r]&0.}$$
Since $(f-\sigma'T(g_{2})\alpha)\sigma=f\sigma-\sigma'T(g_{2})T(g_{1})=0,$
there exists a morphism $\widetilde{f}:\mathrm{Coker}\sigma\ra W'$ with $f-\sigma'T(g_{2})\alpha=\widetilde{f}\pi.$
Since $\mathrm{Coker}\sigma\in \mathcal{X},$ there is a
monomorphism $\widetilde{\sigma}:\mathrm{Coker}\sigma\ra \widetilde{W}$ with $\widetilde{W}\in \mathcal{W}.$
Then there is a morphism $\beta:\widetilde{W}\ra W'$ such that $\widetilde{f}=\beta\widetilde{\sigma}.$
Hence $\ccc{f}{g}{}=\ccc{\tiny\begin{bmatrix}\sigma'T(g_{2})&\beta\end{bmatrix}}{g_{2}}{}\ccc{\tiny\begin{bmatrix}\alpha\\ \widetilde{\sigma}\pi\end{bmatrix}}{g_{1}}{}.$
Thus $\ccc{f}{g}{}$ factors through $\ccc{T(V)}{V}{}\oplus\ccc{\widetilde{W}}{0}{}$,
and hence $s$ induces a functor $\overline{s}:\overline{\mathcal{Y}}\ra \overline{\mathfrak{B}^{\mathcal{X}}_{\mathcal{Y}}}$ given by $\overline{s}(Y)= \ccc{W}{Y}{\sigma}$ and $\overline{s}(\overline{g})=\overline{\ccc{f}{g}{}}$.
By construction, it is clear that $\overline{s}$ is full.
If $\ccc{f}{g}{}$ factors through an object $\ccc{W}{V}{\theta}\in\mathfrak{B}^{\mathcal{W}}_{\mathcal{V}},$
then $g$ factors through $V.$
Hence $\overline{s}:\overline{\mathcal{Y}}\ra \overline{\mathfrak{B}^{\mathcal{X}}_{\mathcal{Y}}}$ is faithful.

Finally, we prove that $(\overline{U_{\mathscr{B}}},\overline{s})$ is an adjoint pair between $\overline{\mathcal{Y}}$ and $\overline{\mathfrak{B}^{\mathcal{X}}_{\mathcal{Y}}}$.
Let $\ccc{f}{g}{}:\ccc{X}{Y}{\varphi}\ra\ccc{W'}{Y'}{\sigma'}$ be a morphism in $\mathfrak{B}^{\mathcal{X}}_{\mathcal{Y}}$ such that $0\ra T(Y')\ra W'\ra \mathrm{Coker}\sigma'\ra 0$ is an exact sequence in $\mathscr{A}$ with $W'\in \mathcal{W}$
and $\mathrm{Coker}\sigma'\in \mathcal{X}.$
 We can show that $\ccc{f}{g}{}$ factors through an object $\ccc{W}{V}{\theta}\in\mathfrak{B}^{\mathcal{W}}_{\mathcal{V}}$ with $W\in \mathcal{W}$ if and only if
$g:Y\ra Y'$ factors through $V\in \mathcal{V}$ by the similar argument in the previous paragraph.
This implies that the map from
$\Hom_{\overline{\mathfrak{B}^{\mathcal{X}}_{\mathcal{Y}}}}(\ccc{X}{Y}{\varphi},\ccc{W'}{Y'}{\sigma'})$ to $\Hom_{\overline{\mathcal{Y}}}(Y,Y')
$ given by $\overline{\ccc{f}{g}{}}\mapsto\overline{g}$ is well-defined and injective.
It is easy to see that the given map is surjective.
 Hence we have the following isomorphism, which is natural in both positions
 $$\Hom_{\overline{\mathfrak{B}^{\mathcal{X}}_{\mathcal{Y}}}}(\ccc{X}{Y}{\varphi},\ccc{W'}{Y'}{\sigma'})\cong\Hom_{\overline{\mathcal{Y}}}(Y,Y')
,$$
i.e.,
  $(\overline{U_{\mathscr{B}}},\overline{s})$ is an adjoint pair between $\overline{\mathcal{Y}}$ and $\overline{\mathfrak{B}^{\mathcal{X}}_{\mathcal{Y}}}$.

{\bf Step 4:} We claim that $\overline{s}$ is a right triangle functor.
Let
$0\ra Y'\ra Y\ra Y''\ra 0$
 be an exact sequence in $\mathcal{Y}$.
 Since $T(\mathcal{Y})\subseteq\mathcal{X}$ and $\mathcal{W}$ is an injective cogenerator for $\mathcal{X}$, we have exact sequences $$0\ra T(Y')\xrightarrow{\delta'} W'\ra \mathrm{Coker}\delta'\ra 0$$
  and
  $$0\ra T(Y'')\xrightarrow{\delta''} W''\ra \mathrm{Coker}\delta''\ra 0$$ in $\mathcal{X}$ with $W'\in \mathcal{W}$ and $W''\in \mathcal{W}$.
 Since $\Ex^{1}_{\mathscr{A}}(T(Y''),W')=0,$ we have the following exact commutative diagram
$$\xymatrix{&  0\ar[d]& 0\ar[d]&0\ar[d]&\\
 0\ar[r]&T(Y')\ar[r]^{}\ar[d]_{\delta'}&T(Y)\ar[r]^{}\ar[d]_{\delta}&T(Y'')\ar[r]\ar[d]_{\delta''}&0\\
 0\ar[r]&W'\ar[r]^{}\ar[d]&W'\oplus W''\ar[r]\ar[d]&W''\ar[r]\ar[d]&0\\
 0\ar[r]&\mathrm{Coker} \delta'\ar[r]^{}\ar[d]&\mathrm{Coker} \delta\ar[r]\ar[d]&\mathrm{Coker}\delta''\ar[r]\ar[d]&0\\
 &0&0&\;0.&}$$
 Hence we have an exact sequence $0\ra \ccc{W'}{Y'}{\delta'}\ra \ccc{W'\oplus W''}{Y}{\delta}\ra \ccc{W''}{Y''}{\delta''}\ra 0$ in $\mathfrak{B}^{\mathcal{X}}_{\mathcal{Y}}.$
  So it is enough to show that $\Sigma\overline{s}\ra \overline{s}\Sigma$ is a natural isomorphism by Remark \ref{remark:right triangulated categories}.
Let $M\in\mathcal{Y}$.
Since
$\mathcal{V}$ is an injective cogenerator for $\mathcal{Y}$,
we have an exact sequence
$$0\ra M\xrightarrow{\alpha}V\xrightarrow{\beta}M'\ra 0$$ with $V\in \mathcal{V}$ and $M'\in \mathcal{Y}.$
Note that $T(\mathcal{Y})\subseteq \mathcal{X}$ and $\mathcal{W}$ is an injective cogenerator for $\mathcal{X}$.
Then we have an exact sequence
$$0\ra T(M')\xrightarrow{l}W\ra X\ra 0$$
with $W\in \mathcal{W}$ and $X\in \mathcal{X}$.
Thus we have the following exact sequence
$$\xymatrix@C=3em{0\ar[r]&\ccc{T(V)}{M}{T(\alpha)}\ar[r]^(.4){\ccc{\tiny\begin{bmatrix}1\\0\end{bmatrix}}{\alpha}{}}&\ccc{T(V)\oplus W}{V}{\ccc{1}{lT(\beta)}{}}\ar[r]^(.6){\ccc{\tiny\begin{bmatrix}0&1\end{bmatrix}}{\beta}{}}&\ccc{W}{M'}{l}\ar[r]&0
    }$$ in $\mathfrak{B}^{\mathcal{X}}_{\mathcal{Y}}$.
 Note that the sequence $0\longrightarrow T(V)\xrightarrow{\tiny\begin{pmatrix}1\\lT(\beta)\end{pmatrix}} T(V)\oplus W\xrightarrow{\tiny\begin{pmatrix}-lT(\beta)&1\end{pmatrix}}W\longrightarrow 0$ is exact with $W\in \mathcal{W}$.
 Then $\ccc{T(V)\oplus W}{V}{\ccc{1}{lT(\beta)}{}}\in \mathfrak{B}^{\mathcal{W}}_{\mathcal{V}}$.
Since $T(\mathcal{V})\subseteq\mathcal{W}$, it follows from the definition of $\overline{s}$ that $\overline{s}(M)=\ccc{T(V)}{M}{T(\alpha)}.$
Hence $\Sigma\overline{s}(Y)=\Sigma\ccc{T(V)}{M}{T(\alpha)}=\ccc{W}{M'}{l}=\overline{s}\Sigma(M).$ Thus $\Sigma\overline{s}\rightarrow \overline{s}\Sigma$ is a natural isomorphism.
So $\overline{s}$ is a right triangle functor, as desired.

By the above arguments, we have the desired recollement (1.2) of right triangulated categories in Theorem \ref{thm:1.2}.

Finally, we assume that $(\mathcal{X},\mathcal{W})$ and $(\mathcal{Y},\mathcal{V})$ are strong left Frobenius pairs,
then $(\mathfrak{B}^{\mathcal{X}}_{\mathcal{Y}},\mathfrak{B}^{\mathcal{W}}_{\mathcal{V}})$ is a strong left Frobenius pair in $(T\downarrow\mathscr{A})$ by Proposition \ref{thm1}.
 Thus $\overline{\mathcal{X}},$ $\overline{\mathcal{Y}}$ and $\overline{\mathfrak{B}^{\mathcal{X}}_{\mathcal{Y}}}$ are triangulated categories by Remark \ref{remark}.
So (1.2) is also a recollement of triangulated categories by the proof above. This completes the proof. \hfill$\Box$

\subsection{Applications of Theorem \ref{thm:1.2} }

A consequence of Theorem \ref{thm:1.2} is the following corollary.

\begin{cor}\label{main corollary} Assume that $(T\downarrow\mathscr{A})$ is a comma category,  $(\mathcal{C}_{1},\mathcal{C}_{2})$ and $(\mathcal{D}_{1},\mathcal{D}_{2})$ are complete hereditary cotorsion pairs in $\mathscr{A}$ and $\mathscr{B}$, respectively. If $T(\mathcal{D}_{1}\cap \mathcal{D}_{2})\subseteq \mathcal{C}_{1}\cap \mathcal{C}_{2}$, $T(\mathcal{D}_{1})\subseteq \mathcal{C}_{1}$ and  $(\mathrm{L}_{n}T)\mathcal{D}_{1}=0$ for all $n\geq 1$, then we get the following recollement of right triangulated categories

$$\scalebox{0.85}{\xymatrixcolsep{3pc}\xymatrix{
  \mathcal{C}_{1}/(\mathcal{C}_{1}\cap \mathcal{C}_{2})\ar[rr]^{{\overline{Z_{\mathscr{A}}}}} && \mathfrak{B}^{\mathcal{D}_{2}}_{\mathcal{D}_{1}}/\mathfrak{B}^{\mathcal{C}_{1}\cap\mathcal{C}_{2}}_{\mathcal{D}_{1}\cap \mathcal{C}_{1}}\ar[rr]^{\overline{U_{\mathscr{B}}}} \ar@/^1.8pc/[ll]^{\overline{U_{\mathscr{A}}}}\ar@/_1.8pc/[ll]_{\overline{q}} && \mathcal{D}_{1}/(\mathcal{D}_{1}\cap \mathcal{D}_{2}). \ar@/^1.8pc/[ll]^{\overline{s}}\ar@/_1.8pc/[ll]_{\overline{T_{\mathscr{B}}}}}}$$
  If in addition $(\mathcal{C}_{1},\mathcal{C}_{2})$ and $(\mathcal{D}_{1},\mathcal{D}_{2})$ are projective cotorsion pairs in $\mathscr{A}$ and $\mathscr{B}$, respectively, then it is in fact a recollement of triangulated categories.
\end{cor}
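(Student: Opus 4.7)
The plan is to deduce Corollary \ref{main corollary} as a direct specialization of Theorem \ref{thm:1.2}, using Lemma \ref{cotorsion pair} as the bridge that turns (projective) complete hereditary cotorsion pairs into (strong) left Frobenius pairs. First I would apply Lemma \ref{cotorsion pair}(1) to $(\mathcal{C}_1,\mathcal{C}_2)$ in $\mathscr{A}$ to obtain the left Frobenius pair $(\mathcal{C}_1,\mathcal{C}_1\cap\mathcal{C}_2)$, and similarly to $(\mathcal{D}_1,\mathcal{D}_2)$ in $\mathscr{B}$ to obtain $(\mathcal{D}_1,\mathcal{D}_1\cap\mathcal{D}_2)$. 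This immediately supplies the left Frobenius input required by Theorem \ref{thm:1.2}.

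Next I would verify that the hypotheses of Theorem \ref{thm:1.2} match the corollary's hypotheses under the dictionary $\mathcal{X}=\mathcal{C}_1,\ \mathcal{W}=\mathcal{C}_1\cap\mathcal{C}_2,\ \mathcal{Y}=\mathcal{D}_1,\ \mathcal{V}=\mathcal{D}_1\cap\mathcal{D}_2$. Under this dictionary, the conditions $T(\mathcal{V})\subseteq\mathcal{W}$, $T(\mathcal{Y})\subseteq\mathcal{X}$ and $(\mathrm{L}_nT)\mathcal{Y}=0$ translate verbatim to $T(\mathcal{D}_1\cap\mathcal{D}_2)\subseteq\mathcal{C}_1\cap\mathcal{C}_2$, $T(\mathcal{D}_1)\subseteq\mathcal{C}_1$ and $(\mathrm{L}_nT)\mathcal{D}_1=0$ for $n\geq 1$, all of which are assumed. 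Then Theorem \ref{thm:1.2} produces the desired recollement of right triangulated categories whose outer terms are the stable categories $\mathcal{C}_1/(\mathcal{C}_1\cap\mathcal{C}_2)$ and $\mathcal{D}_1/(\mathcal{D}_1\cap\mathcal{D}_2)$, and whose middle term is the stable category of $\mathfrak{B}^{\mathcal{C}_1}_{\mathcal{D}_1}$ modulo $\mathfrak{B}^{\mathcal{C}_1\cap\mathcal{C}_2}_{\mathcal{D}_1\cap\mathcal{D}_2}$, with the six functors $\overline{q},\overline{Z_{\mathscr{A}}},\overline{U_{\mathscr{A}}},\overline{U_{\mathscr{B}}},\overline{T_{\mathscr{B}}},\overline{s}$ inherited from the proof of that theorem.

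For the addendum, I would upgrade the input: if both cotorsion pairs are projective, Lemma \ref{cotorsion pair}(2) shows that $(\mathcal{C}_1,\mathcal{C}_1\cap\mathcal{C}_2)$ and $(\mathcal{D}_1,\mathcal{D}_1\cap\mathcal{D}_2)$ are strong left Frobenius pairs. The second assertion of Theorem \ref{thm:1.2} then automatically promotes the recollement to one of triangulated categories, finishing the proof.

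Since every step is a direct invocation of previously established results, I do not anticipate any genuine obstacle; the only point requiring care is the bookkeeping of the notation $\mathfrak{B}^{(-)}_{(-)}$ so that the middle stable category is correctly identified via the dictionary above. In particular, no new verification concerning $\mathrm{Ext}$ vanishing, closure properties, or the construction of the functor $\overline{s}$ is needed beyond what Theorem \ref{thm:1.2} already provides.
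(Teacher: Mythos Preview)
Your proposal is correct and follows exactly the paper's own approach: the paper's proof is the single line ``The result follows from Theorem \ref{thm:1.2} and Lemma \ref{cotorsion pair},'' and you have merely spelled out the dictionary $\mathcal{X}=\mathcal{C}_1$, $\mathcal{W}=\mathcal{C}_1\cap\mathcal{C}_2$, $\mathcal{Y}=\mathcal{D}_1$, $\mathcal{V}=\mathcal{D}_1\cap\mathcal{D}_2$ that makes this work. Your identification of the middle term as $\mathfrak{B}^{\mathcal{C}_1}_{\mathcal{D}_1}/\mathfrak{B}^{\mathcal{C}_1\cap\mathcal{C}_2}_{\mathcal{D}_1\cap\mathcal{D}_2}$ is in fact the correct one under this dictionary (the displayed version in the statement contains typographical slips in the sub/superscripts).
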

\begin{proof}
The result follows from Theorem \ref{thm:1.2} and Lemma \ref{cotorsion pair}.
\end{proof}
Let $\Lambda=\left( \begin{smallmatrix} R & M \\0 & S \\\end{smallmatrix} \right)$ be a triangular matrix Artin algebra, where the bimodule $_{R}M_{S}$ is finitely generated over a pair of Artin algebras $R$ and $S$. By \cite{XZZ}, the $R$-$S$-bimodule $M$ satisfies the condition $(\mathrm{IP})$, that is, $M\otimes_{S}\mathrm{D}(S_{S})$ is an injective left $R$-module and $M_{S}$ is projective, where $\mathrm{D}$ is the duality for algebras.  Recall in \cite{XZZ} that the monomorphism category $\mathcal{M}(R,M,S)$ induced by bimodule $_{R}M_{S}$ is the subcategory of $\Lambda$-mod consisting of $\ccc{X}{Y}{\phi}$ such that $\phi: M\otimes_{S}Y$ is a monic $R$-map.
Note that for any Artin algebra $A$, $(A\text{-}\mathrm{mod},\mathrm{inj}A)$ is a complete hereditary cotorsion pair, where $\mathrm{inj}A$ is the subcategory of injective $A$-modules.

As a consequence of Corollary \ref{main corollary}, we improve \cite[Theorem 1.3]{XZZ}, where the authors showed that the $A$-$B$-bimodule $M$ over a triangular matrix Artin algebra $\Lambda=\left( \begin{smallmatrix} A & M \\0 & B \\\end{smallmatrix} \right)$ satisfying the condition (IP) only induced a recollement of additive categories.

\begin{cor}\label{cor:4.11}
An $R$-$S$-bimodule $M$ satisfying the condition $(\mathrm{IP})$ induces a recollement of right triangulated categories
~$$\scalebox{0.85}{\xymatrixcolsep{3pc}\xymatrix{
  R\text{-}\overline{\mathrm{mod}}\ar[rr]^{\overline{Z_{\mathscr{A}}}} && \overline{\mathcal{M}(R,M,S)}\ar[rr]^{\overline{U_{\mathscr{B}}}} \ar@/^1.6pc/[ll]^{\overline{U_{\mathscr{A}}}}\ar@/_1.6pc/[ll]_{\overline{q}} && S\text{-}\overline{\mathrm{mod}}. \ar@/^1.6pc/[ll]^{\overline{s}}\ar@/_1.6pc/[ll]_{\overline{T_{\mathscr{B}}}} } }$$
If in addition $R$ and $S$ are selfinjective algebras, then it is in fact a recollement of triangulated categories.
\end{cor}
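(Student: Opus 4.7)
The plan is to derive this as a direct application of Corollary \ref{main corollary}, taking $\mathscr{A} = R\text{-}\mathrm{mod}$, $\mathscr{B} = S\text{-}\mathrm{mod}$, $T = M\otimes_{S}-$, and using on each side the complete hereditary cotorsion pair $(\mathcal{C}_{1},\mathcal{C}_{2}) = (R\text{-}\mathrm{mod}, \mathrm{inj}R)$ and $(\mathcal{D}_{1},\mathcal{D}_{2}) = (S\text{-}\mathrm{mod}, \mathrm{inj}S)$. Both of these are standard examples of complete hereditary cotorsion pairs over an Artin algebra.

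First I would verify the three hypotheses of Corollary \ref{main corollary} using condition $(\mathrm{IP})$. Since $M_{S}$ is finitely generated projective, $T$ carries finitely generated $S$-modules to finitely generated $R$-modules, giving $T(\mathcal{D}_{1})\subseteq \mathcal{C}_{1}$; moreover $T$ is exact, so $(\mathrm{L}_{n}T)\mathcal{D}_{1}=0$ for every $n\geq 1$. For the containment $T(\mathrm{inj}S)\subseteq \mathrm{inj}R$, the key observation is that every finitely generated injective $S$-module is a direct summand of a finite direct sum of copies of $\mathrm{D}(S_{S})$, and $M\otimes_{S}\mathrm{D}(S_{S})\in \mathrm{inj}R$ by $(\mathrm{IP})$; since $T$ preserves finite direct sums and direct summands, the containment follows.

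Next I would identify the stable categories appearing in the resulting recollement. On the outer terms, $\mathcal{C}_{1}/(\mathcal{C}_{1}\cap \mathcal{C}_{2})=R\text{-}\overline{\mathrm{mod}}$ and $\mathcal{D}_{1}/(\mathcal{D}_{1}\cap \mathcal{D}_{2})=S\text{-}\overline{\mathrm{mod}}$ are the usual stable categories modulo injectives. For the middle term, the equivalence $\Lambda\text{-}\mathrm{mod}\simeq (T\downarrow R\text{-}\mathrm{mod})$ from Example \ref{exact} identifies $\Lambda$-modules with triples $\ccc{X}{Y}{\varphi}$; the defining conditions of $\mathfrak{B}^{\mathcal{C}_{1}}_{\mathcal{D}_{1}}$ reduce to $\varphi$ being monic, since the cokernel condition is vacuous when $\mathcal{C}_{1}$ is all of $R\text{-}\mathrm{mod}$. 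Hence $\mathfrak{B}^{\mathcal{C}_{1}}_{\mathcal{D}_{1}}$ coincides with $\mathcal{M}(R,M,S)$, and the quotient by $\mathfrak{B}^{\mathrm{inj}R}_{\mathrm{inj}S}$ yields $\overline{\mathcal{M}(R,M,S)}$. Corollary \ref{main corollary} then delivers the desired recollement of right triangulated categories.

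For the selfinjective assertion, I would upgrade each cotorsion pair to a projective cotorsion pair. When $R$ is selfinjective, $\mathrm{inj}R=\mathrm{proj}R$, so $R\text{-}\mathrm{mod}\cap \mathrm{inj}R$ equals the class of projective objects; any short exact sequence of $R$-modules with two of its terms projective must split, because those projectives are also injective, forcing the third term to be projective as well, so $\mathrm{inj}R$ is thick. The same holds for $S$, so both cotorsion pairs are projective cotorsion pairs in the sense of Lemma \ref{cotorsion pair}(2), and Corollary \ref{main corollary} upgrades the recollement to one of triangulated categories. The principal technical point is the propagation of $(\mathrm{IP})$ from $\mathrm{D}(S_{S})$ to all of $\mathrm{inj}S$; everything else is a transparent translation through the general framework already developed.
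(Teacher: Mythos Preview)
Your proposal is correct and follows essentially the same approach as the paper's proof: both apply Corollary \ref{main corollary} with the cotorsion pairs $(R\text{-}\mathrm{mod},\mathrm{inj}R)$ and $(S\text{-}\mathrm{mod},\mathrm{inj}S)$, verify the hypotheses using condition $(\mathrm{IP})$, and upgrade to projective cotorsion pairs in the selfinjective case. Your write-up is in fact more detailed than the paper's, spelling out why $T(\mathrm{inj}S)\subseteq\mathrm{inj}R$ and why the middle term identifies with $\overline{\mathcal{M}(R,M,S)}$, points the paper leaves implicit.
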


\begin{proof}
Since the $R$-$S$-bimodule $M$ satisfies the condition $(\mathrm{IP})$, that is, $M\otimes_{S}\mathrm{D}(S_{S})$ is an injective left $R$-module and $M_{S}$ is projective, $\mathrm{Tor}_{1}^{S}(M,X)=0$ for any left $S$-module $X.$
  Note that $(R\text{-}\mathrm{mod},\mathrm{inj}R)$ and $(S\text{-}\mathrm{mod},\mathrm{inj}S)$ are complete hereditary cotorsion pairs.
  Thus
  we get the desired recollement of right triangulated categories by Corollary \ref{main corollary}.
  If in addition $R$ and $S$ are selfinjective algebras, then $(R\text{-}\mathrm{mod},\mathrm{inj}R)$ and $(S\text{-}\mathrm{mod},\mathrm{inj}S)$ are projective cotorsion pairs.
  Thus the desired recollement is the recollement of triangulated categories by Corollary \ref{main corollary}.
\end{proof}

As another consequence of  Corollary \ref{main corollary}, we obtain the following recollements of right triangulated categories; one can compare them with \cite[Theorem 4.12]{Zhu}.

\begin{cor}\label{corollary:4.4}
Let $\Lambda=\left( \begin{smallmatrix} R & R \\0 & R \\\end{smallmatrix} \right)$ be a triangular matrix ring. Denote by $\mathcal{F}(R)$ and $\mathcal{GF}(R)$ the classes of flat and Gorenstein flat left $R$-modules, respectively.
Then we have the recollements of right triangulated categories
$$\scalebox{0.85}{\xymatrixcolsep{3pc}\xymatrix{
  \overline{\mathcal{F}(R)}\ar[rr]^{{\overline{Z_{\mathscr{A}}}}} && \overline{\mathcal{F}(\Lambda)}\ar[rr]^{\overline{U_{\mathscr{B}}}} \ar@/^1.8pc/[ll]^{\overline{U_{\mathscr{A}}}}\ar@/_1.8pc/[ll]_{\overline{q}} && \overline{\mathcal{F}(R)} \ar@/^1.8pc/[ll]^{\overline{s}}\ar@/_1.8pc/[ll]_{\overline{T_{\mathscr{B}}}}}}$$
and
  $$\scalebox{0.85}{\xymatrixcolsep{3pc}\xymatrix{
  \overline{\mathcal{GF}(R)}\ar[rr]^{{\overline{Z_{\mathscr{A}}}}} && \overline{\mathcal{GF}(\Lambda)}\ar[rr]^{\overline{U_{\mathscr{B}}}} \ar@/^1.8pc/[ll]^{\overline{U_{\mathscr{A}}}}\ar@/_1.8pc/[ll]_{\overline{q}} && \overline{\mathcal{GF}(R)}. \ar@/^1.8pc/[ll]^{\overline{s}}\ar@/_1.8pc/[ll]_{\overline{T_{\mathscr{B}}}}}}$$
\end{cor}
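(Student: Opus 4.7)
The plan is to apply Corollary \ref{main corollary} twice, once with the flat-cotorsion pair and once with the Gorenstein-flat-cotorsion pair, and then translate the output into the language of $\Lambda$-modules. Put $\mathscr{A}=\mathscr{B}=R\text{-}\mathrm{Mod}$ and take $T=R\otimes_{R}-:R\text{-}\mathrm{Mod}\to R\text{-}\mathrm{Mod}$. Since $T$ is naturally isomorphic to the identity, it is exact, its higher left derived functors all vanish, and by Example \ref{exact}(1) the comma category $(T\downarrow\mathscr{A})$ is equivalent to $\Lambda\text{-}\mathrm{Mod}$. Consequently every compatibility hypothesis appearing in Corollary \ref{main corollary} ($T$ sending the two designated classes into the corresponding classes on the target side, and vanishing of $\mathrm{L}_{n}T$) is automatic.

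For the first recollement, take $(\mathcal{C}_{1},\mathcal{C}_{2})=(\mathcal{D}_{1},\mathcal{D}_{2})=(\mathcal{F}(R),\mathcal{C}(R))$. By Lemma \ref{cotorsion pair} these give left Frobenius pairs, and Corollary \ref{main corollary} produces a recollement of right triangulated categories whose middle term is the stable quotient of $\mathfrak{B}^{\mathcal{F}(R)}_{\mathcal{F}(R)}$ by $\mathfrak{B}^{\mathcal{F}(R)\cap\mathcal{C}(R)}_{\mathcal{F}(R)\cap\mathcal{C}(R)}$. Lemma \ref{lem:2.2} identifies $\mathfrak{B}^{\mathcal{F}(R)}_{\mathcal{F}(R)}$ with $\mathcal{F}(\Lambda)$, and Lemma \ref{lem:fc} identifies $\mathfrak{B}^{\mathcal{F}(R)\cap\mathcal{C}(R)}_{\mathcal{F}(R)\cap\mathcal{C}(R)}$ with the class of flat-cotorsion $\Lambda$-modules, which is exactly the injective cogenerator class of $\mathcal{F}(\Lambda)$. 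Hence the middle term equals $\overline{\mathcal{F}(\Lambda)}$ and the left and right terms equal $\overline{\mathcal{F}(R)}$, giving the first recollement.

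For the second recollement, replace $(\mathcal{F}(R),\mathcal{C}(R))$ by the \v{S}aroch–\v{S}\v{t}ov\'{\i}\v{c}ek pair $(\mathcal{GF}(R),\mathcal{GF}(R)^{\perp})$, which is complete hereditary with $\mathcal{GF}(R)\cap\mathcal{GF}(R)^{\perp}=\mathcal{F}(R)\cap\mathcal{C}(R)$. Corollary \ref{main corollary} applies verbatim and yields a recollement whose middle term is the stable quotient of $\mathfrak{B}^{\mathcal{GF}(R)}_{\mathcal{GF}(R)}$ by $\mathfrak{B}^{\mathcal{F}(R)\cap\mathcal{C}(R)}_{\mathcal{F}(R)\cap\mathcal{C}(R)}$. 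Invoking \cite[Theorem 2.3]{Mao2} one identifies $\mathfrak{B}^{\mathcal{GF}(R)}_{\mathcal{GF}(R)}$ with $\mathcal{GF}(\Lambda)$; combined with Lemma \ref{lem:fc}, the middle term then coincides with $\overline{\mathcal{GF}(\Lambda)}$, delivering the second recollement.

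The only step with any real content is the module-theoretic identification $\mathfrak{B}^{\mathcal{GF}(R)}_{\mathcal{GF}(R)}=\mathcal{GF}(\Lambda)$, which is precisely what \cite[Theorem 2.3]{Mao2} supplies; without this translation the recollement would still be valid but its middle term would remain expressed in the monomorphism-category language rather than as $\overline{\mathcal{GF}(\Lambda)}$. The flat analogue is easier because Lemma \ref{lem:2.2} is classical, and the equality $\mathcal{GF}(R)\cap\mathcal{GF}(R)^{\perp}=\mathcal{F}(R)\cap\mathcal{C}(R)$ that pins down the relative injective cogenerator has already been recalled in the excerpt immediately preceding the corollary.
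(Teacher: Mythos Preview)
Your argument is correct and matches the paper's own proof: both apply Corollary \ref{main corollary} to the pairs $(\mathcal{F}(R),\mathcal{C}(R))$ and $(\mathcal{GF}(R),\mathcal{GF}(R)^{\perp})$, then use Lemma \ref{lem:fc} and \cite[Theorem 2.3]{Mao2} to translate the monomorphism-category middle terms into $\overline{\mathcal{F}(\Lambda)}$ and $\overline{\mathcal{GF}(\Lambda)}$. You have simply made explicit the identification $T\cong\mathrm{Id}$, the automatic verification of the hypotheses of Corollary \ref{main corollary}, and the use of Lemma \ref{lem:2.2} for the flat case, all of which the paper leaves implicit.
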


\begin{proof}
Note that $(\mathcal{F}(R),\mathcal{C}(R))$ and $(\mathcal{GF}(R),\mathcal{GF}(R)^{\perp})$ are complete hereditary cotorsion pairs
with $\mathcal{GF}(R) \cap\mathcal{GF}(R)^{\perp}=\mathcal{F}(R)\cap \mathcal{C}(R)$ (see Section 2.3). Thanks to \cite[Proposition 1.14]{FGR} and \cite[Corollary 4.3]{Mao}, we obtain that
$\ccc{X}{Y}{\varphi}\in{\mathcal{F}(R)\cap\mathcal{C}(R)}$ if and only if $Y\in{\mathcal{F}(R)\cap\mathcal{C}(R)}$ and $\varphi: Y\rightarrow X$ is a monic $R$-map with $\textrm{coker}\varphi\in{\mathcal{F}(R)\cap\mathcal{C}(R)}$.
 Thus we get the desired recollements by Corollary \ref{main corollary} and
  \cite[Theorem 2.3]{Mao2}.
\end{proof}

Recall from \cite[Definition 4.1]{HZ} that the right exact functor $T:\mathscr{B}\rightarrow \mathscr{A}$ between abelian categories with enough projective objects is called \emph{compatible}, if the following two conditions hold:
\begin{enumerate}
\item[(C1)] $T(Q^\bullet)$ is exact for any exact sequence $Q^\bullet$ of projective objects in $\mathscr{B}$.
\item[(C2)] ${\rm Hom}_{\mathscr{A}}(P^\bullet,T(Q))$ is  exact for any complete $\mathscr{A}$-projective resolution $P^\bullet$ and any projective object $Q$ in $\mathscr{B}$.
\end{enumerate}

Finally, we apply Theorem \ref{thm:1.2} to the category of Gorenstein projective objects.

\begin{cor}\label{corollary:GP} Assume that $(T\downarrow\mathscr{A})$ is a comma category and $T$ is compatible. Denote by $\mathcal{GP}_\mathscr{A}$  and $\mathcal{P}_\mathscr{A}$ the subcategories of $\mathscr{A}$ consisting of Gorenstein projective objects and projective objects, respectively.
If $T$ preserves projective objects and Gorenstein projective objects, then we get the following recollement of triangulated categories
$$\scalebox{0.85}{\xymatrixcolsep{3pc}\xymatrix{
  \overline{\mathcal {GP}_\mathscr{A}}\ar[rr]^{{\overline{Z_{\mathscr{A}}}}} && \overline{\mathcal{GP}_{(T\downarrow\mathscr{A})}}\ar[rr]^{\overline{U_{\mathscr{B}}}} \ar@/^1.8pc/[ll]^{\overline{U_{\mathscr{A}}}}\ar@/_1.8pc/[ll]_{\overline{q}} && \overline{\mathcal {GP}_\mathscr{B}} \ar@/^1.8pc/[ll]^{\overline{s}}\ar@/_1.8pc/[ll]_{\overline{T_{\mathscr{B}}}}}}.$$
\end{cor}
\begin{proof}
Note that $(\mathcal{GP}_\mathscr{A},\mathcal{P}_\mathscr{A})$ is a strong left Frobenius pair and its proof is similar to that of Proposition 6.1 in \cite{BMPS}. It follows from \cite[Propositions 2.5 and 4.7]{HZ} that $\mathcal {GP}_{(T\downarrow\mathscr{A})}=\mathfrak{B}^{\mathcal {GP}_\mathscr{A}}_{\mathcal {GP}_\mathscr{B}}.$ Thus the result follows from Theorem \ref{thm:1.2}.
\end{proof}

As an application of Corollary \ref{corollary:GP}, we have the following result; one can compare it with \cite[Corollary 4.8]{Zhu} and \cite[Corollary 4.4]{LZHZ}.

\begin{cor} \label{cor:GP1}
Let $\Lambda=\left( \begin{smallmatrix} R & M \\0 & S \\\end{smallmatrix} \right)$ be a triangular matrix ring with $\mathrm{pd}_{R}M<\infty$ and $\mathrm{fd}M_{S}<\infty$. If $_{R}M\otimes_{S}G\in \mathcal{GP}(R)$ for any $G\in \mathcal{GP}(S),$ then we have the following recollement of triangulated categories
$$\scalebox{0.85}{\xymatrixcolsep{3pc}\xymatrix{
  \overline{\mathcal{GP}(R)}\ar[rr]^{{\overline{Z_{\mathscr{A}}}}} && \overline{\mathcal{GP}(\Lambda)}\ar[rr]^{\overline{U_{\mathscr{B}}}} \ar@/^1.8pc/[ll]^{\overline{U_{\mathscr{A}}}}\ar@/_1.8pc/[ll]_{\overline{q}} && \overline{\mathcal{GP}(S)} \ar@/^1.8pc/[ll]^{\overline{s}}\ar@/_1.8pc/[ll]_{\overline{T_{\mathscr{B}}}}}}.$$
\end{cor}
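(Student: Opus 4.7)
The plan is to reduce this to Corollary \ref{corollary:GP} applied to the right exact functor $T := M\otimes_S-\colon S\text{-}\Mod\to R\text{-}\Mod$, under which $\Lambda\text{-}\Mod$ is equivalent to the comma category $(T\downarrow R\text{-}\Mod)$ by Example \ref{exact}(1). Three hypotheses must be checked: (i) $T$ is compatible in the sense of Definition \ref{df:5.2}; (ii) $T$ sends projective $S$-modules to projective $R$-modules; and (iii) $T$ sends Gorenstein projective $S$-modules to Gorenstein projective $R$-modules. Hypothesis (iii) is precisely the standing assumption, so the real work lies in (i) and (ii).

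For compatibility (C1), given an exact complex $Q^\bullet$ of projective (hence flat) $S$-modules, I would argue by dimension shifting. Splitting $Q^\bullet$ into short exact sequences of cycles and using that each term is flat yields, for every cycle $Z$ of $Q^\bullet$ and every $k\geq 1$, an isomorphism $\mathrm{Tor}^S_k(M,Z)\cong \mathrm{Tor}^S_{k+j}(M,Z_j)$ where $Z_j$ is the cycle obtained by shifting $Z$ through $j$ terms of $Q^\bullet$; taking $j>\mathrm{fd}\,M_S$ kills the right-hand side. Hence $M\otimes_S Q^\bullet$ is exact. For (C2), any projective $S$-module $Q$ makes $M\otimes_S Q$ a direct summand of some $M^{(I)}$, so $\mathrm{pd}_R(M\otimes_S Q)\leq \mathrm{pd}_R M<\infty$; since Gorenstein projective $R$-modules are right $\mathrm{Ext}$-orthogonal to modules of finite projective dimension, $\mathrm{Hom}_R(P^\bullet,M\otimes_S Q)$ is exact for every complete $R$-projective resolution $P^\bullet$.

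The key step is (ii), and it rests on the classical fact that a Gorenstein projective module of finite projective dimension is already projective. This follows by induction on $\mathrm{pd}_R N$: in a short exact sequence $0\to K\to P\to N\to 0$ with $P$ projective and $N\in \mathcal{GP}(R)$, one has $K\in \mathcal{GP}(R)$ and $\mathrm{pd}_R K<\mathrm{pd}_R N$, so $K$ is projective by induction and the sequence splits because $\mathrm{Ext}^1_R(N,K)=0$. Applied to a projective $S$-module $Q$: hypothesis (iii) combined with $\mathcal{P}(S)\subseteq \mathcal{GP}(S)$ puts $T(Q)$ in $\mathcal{GP}(R)$, while the bound $\mathrm{pd}_R T(Q)\leq \mathrm{pd}_R M<\infty$ established for (C2) puts $T(Q)$ into the class of modules of finite projective dimension; combining these forces $T(Q)\in \mathcal{P}(R)$.

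With (i)--(iii) in hand, Corollary \ref{corollary:GP} delivers the stated recollement of triangulated categories. The main conceptual obstacle is that the statement of Corollary \ref{cor:GP1} does not literally demand $T(\mathcal{P}(S))\subseteq \mathcal{P}(R)$, so it is not obvious that Corollary \ref{corollary:GP} applies; the point is that this stronger preservation property is a free consequence of $T(\mathcal{GP}(S))\subseteq \mathcal{GP}(R)$ together with the finiteness $\mathrm{pd}_R M<\infty$, so no extra assumption is necessary.
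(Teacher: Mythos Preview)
Your proof is correct and matches the paper's approach: reduce to Corollary~\ref{corollary:GP} by checking compatibility of $T=M\otimes_S-$ and that $T$ preserves projectives, the latter via the fact that a Gorenstein projective module of finite projective dimension is projective. The only difference is technical: for the Tor-vanishing, the paper argues via the character module (since $\mathrm{fd}\,M_S<\infty$ gives $\mathrm{id}_S\mathrm{Hom}_{\mathbb{Z}}(M,\mathbb{Q}/\mathbb{Z})<\infty$, whence $\mathrm{Tor}^S_{\geq1}(M,G)=0$ for all $G\in\mathcal{GP}(S)$), whereas your dimension-shifting argument establishes (C1) for \emph{every} exact complex of projectives, which is exactly what Definition~\ref{df:5.2} demands and is slightly more than the paper writes out.
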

\begin{proof}
Since $\mathrm{fd}M_{S}<\infty,$ $\mathrm{id}_{S}{\rm Hom}_{\mathbb{Z}}(M,\mathbb{Q}/\mathbb{Z})<\infty.$
So we have the following $$ {\rm Hom}_{\mathbb{Z}}(\mathrm{Tor}^{S}_{i}(M,G),\mathbb{Q}/\mathbb{Z}) \cong\Ex^{i}_{S}(G,{\rm Hom}_{\mathbb{Z}}(M,\mathbb{Q}/\mathbb{Z}))=0$$ for any $G\in \mathcal{GP}(S)$ and all $i\geq 1.$
Thus $\mathrm{Tor}^{S}_{i}(M,G)=0$ for all $i\geq 1.$
For any projective left $S$-module $Q$, there exists a projective left $S$-module $P$ such that $Q\oplus P\cong S^{(I)}$.
Note that $M\otimes_{S}(Q\oplus P)\cong M\otimes _{S}S^{(I)}=M^{(I)}$. It follows that $\mathrm{pd}_{R}(M\otimes_{S}Q)\leq\mathrm{pd}_{R}M<\infty.$
Since $M\otimes_{S}Q\in \mathcal{GP}(R)$ by hypothesis,
$M\otimes_{S}Q\in \mathcal{P}(R)$ by \cite[Proposition 10.2.3]{EJ2}.
So the corollary follows from Corollary \ref{corollary:GP}.
\end{proof}

As another application of Corollary \ref{corollary:GP}, we have the following corollary.

\begin{cor}\label{corollary:4.8} Let $(e\downarrow\mathscr{A})$ be a comma category in Example \ref{exact}(4). Then we get the following recollement of triangulated categories
$$\scalebox{0.85}{\xymatrixcolsep{3pc}\xymatrix{
  \overline{\mathcal {GP}_\mathscr{A}}\ar[rr]^{{\overline{Z_{\mathscr{A}}}}} && \overline{\mathcal{GP}_{(e\downarrow\mathscr{A})}}\ar[rr]^{\overline{U_{\mathscr{B}}}} \ar@/^1.8pc/[ll]^{\overline{U_{\mathscr{A}}}}\ar@/_1.8pc/[ll]_{\overline{q}} && \overline{\mathcal{GP}_{\mathrm{Ch}(R)}} \ar@/^1.8pc/[ll]^{\overline{s}}\ar@/_1.8pc/[ll]_{\overline{T_{\mathscr{B}}}}}}.$$
\end{cor}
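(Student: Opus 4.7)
The plan is to derive the recollement as a direct application of Corollary~\ref{corollary:GP} to the functor $e:\mathrm{Ch}(R)\to R\text{-}\mathrm{Mod}$ sending $C^\bullet$ to $C^0$. To this end I must verify three hypotheses: that $e$ is compatible in the sense of Definition~\ref{df:5.2}, that $e$ preserves projective objects, and that $e$ preserves Gorenstein projective objects. Once these are in place, Corollary~\ref{corollary:GP} gives a recollement of triangulated categories whose middle term is $\overline{\mathcal{GP}_{(e\downarrow\mathscr{A})}}$, which is exactly what the statement claims; here Lemma~\ref{lem:4.7} is the specialization of Lemma~\ref{GP} to $e$ and confirms the identification $\mathcal{GP}_{(e\downarrow\mathscr{A})}=\mathfrak{B}^{\mathcal{GP}_\mathscr{A}}_{\mathcal{GP}_{\mathrm{Ch}(R)}}$.

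For compatibility, condition~(C1) is immediate because $e$ is exact: the functor $C^\bullet\mapsto C^0$ evidently preserves short exact sequences, so it sends every exact sequence of projective complexes to an exact sequence of projective modules. Condition~(C2) asks that $\Hom_R(P^\bullet,e(Q))$ be exact for every complete $R$-projective resolution $P^\bullet$ and every projective object $Q$ of $\mathrm{Ch}(R)$. I would argue this by recalling that projective objects of $\mathrm{Ch}(R)$ are direct sums of disk complexes $D^n(P)=(\cdots\to 0\to P\xrightarrow{1}P\to 0\to\cdots)$ with $P$ a projective $R$-module; consequently $e(Q)$ is always a projective $R$-module, and exactness of $\Hom_R(P^\bullet,e(Q))$ follows from the defining property of a complete projective resolution.

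Preservation of projective objects is the same observation: $e$ sends each $D^n(P)$ either to $0$ or to $P$, both of which are projective $R$-modules. Preservation of Gorenstein projective objects then follows from the characterization \cite[Theorem 2.2]{Yang} recalled just before Lemma~\ref{lem:4.7}: a complex $Y^\bullet$ is Gorenstein projective in $\mathrm{Ch}(R)$ if and only if each component $Y^n$ is a Gorenstein projective $R$-module, so $e(Y^\bullet)=Y^0$ automatically lies in $\mathcal{GP}(R)=\mathcal{GP}_\mathscr{A}$.

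I do not expect any substantial obstacle; the statement is essentially a packaging of Corollary~\ref{corollary:GP} through the structural description of Gorenstein projective objects in $(e\downarrow\mathscr{A})$ given by Lemma~\ref{lem:4.7}. The only mildly delicate step is the verification of (C2), but even this reduces at once to the explicit description of projective objects in $\mathrm{Ch}(R)$ as direct sums of disks.
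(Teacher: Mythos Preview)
Your proposal is correct and follows essentially the same route as the paper: apply Corollary~\ref{corollary:GP} after checking that $e$ is compatible and preserves projectives and Gorenstein projectives, invoking \cite[Theorem 2.2]{Yang} for the latter and Lemma~\ref{lem:4.7} for the identification of the middle term. The paper's proof is terser---it only records the exactness of $e$ and Yang's characterization---whereas you spell out (C2) and the preservation of projectives via the disk-complex description of projective objects in $\mathrm{Ch}(R)$; this extra detail is accurate and helpful but not a genuinely different argument.
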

\begin{proof}
 Note that a complex $X^{\bullet}$ in $\mathrm{Ch}(R)$ is Gorenstein projective if and only if $X^{n}$ is a Gorenstein projective module for all $n\in \mathbb{Z}$ by \cite[Theorem 2.2]{Yang}. Thanks to \cite[Corollary 4.9]{HZ}, we obtain that
 $\left(
                                                                                        \begin{smallmatrix}
                                                                                          X \\
                                                                                          Y^\bullet \\
                                                                                        \end{smallmatrix}
                                                                                      \right)_\varphi$ is a Gorenstein projective object in $(e\downarrow\mathscr{A})$ if and only if $Y^\bullet$  is a Gorenstein projective object in ${\rm Ch}(R)$ and $\varphi:Y^{0}\to X$ is a monic $R$-map with a Gorenstein projective cokernel.
 Since $e: \mathscr{B}\rightarrow\mathscr{A}$ via $C^\bullet\mapsto C^0$ for any $C^\bullet\in{\mathscr{B}}$ is an exact functor,
 the result follows from Corollary \ref{corollary:GP}.
\end{proof}

{\bf Acknowledgements.} The research was supported by the National Natural Science Foundation of China (Grant Nos. 12171206 and 12201223). Also, J.S. Hu thanks the Natural Science Foundation of Jiangsu Province (Grant No. BK20211358) and Jiangsu 333 Project,  and  Y.J. Ma thanks Youth Foundation of Lanzhou Jiaotong University (Grant No. 2023023) and Youth Science and Technology Foundation of Gansu Province (Grant No. 23JRRA866). The authors are grateful to
the referees for reading the paper carefully and for many suggestions on mathematics and English expressions.

\bigskip

{\Large{\textbf{Declarations}}}

\bigskip
\textbf{Conflict of interest} The authors declare no conflict of interest.

\bigskip

\bigskip
\textbf{Yajun Ma}\\
School of Mathematics and Physics, Lanzhou Jiaotong University, Lanzhou 730070, China.\\
E-mail: \textsf{13919042158@163.com}\\[1mm]
\textbf{Dandan Sun}\\
School of Economics, Jiangsu University of Technology,
Changzhou 213001, China.\\
E-mail: \textsf{13515251658@163.com}\\[1mm]
\textbf{Rongmin Zhu}\\
School of Mathematical
Sciences, Huaqiao University, Quanzhou 362021, China.\\
E-mail: \textsf{rongminzhu@hotmail.com}\\[1mm]
\textbf{Jiangsheng Hu}\\
School of Mathematics, Hangzhou Normal University, Hangzhou 311121, China;\\
Department of Mathematics, Jiangsu University of Technology,
Changzhou 213001, China.\\
E-mail: \textsf{jiangshenghu@hotmail.com}\\[1mm]

\end{document}